\title[Theory of non-lc ideal sheaves]
{Theory of non-lc ideal sheaves\\ ---Basic Properties---}
\author{Osamu Fujino} 
\subjclass[2000]{Primary 14E15; Secondary 14E30.}
\date{2009/10/25, version 1.01}
\keywords{log canonical singularities, 
multiplier ideal sheaves, 
inversion of 
adjunction}
\address{Department of Mathematics, Faculty of Science, Kyoto University, 
Kyoto 606-8502 Japan}
\email{fujino@math.kyoto-u.ac.jp}
\newcommand{\Nlc}[0]{{\operatorname{Nlc}}}
\newcommand{\Ker}[0]{{\operatorname{Ker}}}
\newcommand{\Spec}[0]{{\operatorname{Spec}}}
\newcommand{\mult}[0]{{\operatorname{mult}}}
\newcommand{\im}[0]{{\operatorname{Im}}}
\newcommand{\Exc}[0]{{\operatorname{Exc}}}
\newcommand{\Supp}[0]{{\operatorname{Supp}}}
\newtheorem{thm}{Theorem}[section]
\newtheorem{lem}[thm]{Lemma}
\newtheorem{cor}[thm]{Corollary}
\newtheorem{prop}[thm]{Proposition}
\newtheorem*{claim}{Claim}
\theoremstyle{definition}
\newtheorem{ex}[thm]{Example}
\newtheorem{defn}[thm]{Definition}
\newtheorem{rem}[thm]{Remark}
\newtheorem*{ack}{Acknowledgments}       
\newtheorem*{notation}{Notation}         
\newtheorem{que}[thm]{Question}
\newtheorem{step}{Step}
\begin{document}
\bibliographystyle{amsalpha+}

\begin{abstract}
We introduce the notion of non-lc ideal sheaves. 
It is an analogue of the notion of multiplier ideal 
sheaves. 
We establish the restriction theorem, which seems to be 
the most important property of non-lc ideal sheaves. 
\end{abstract}

\maketitle

\section{{Introduction}}\label{intro}

Let $X$ be a smooth complex algebraic 
variety and $B$ an effective $\mathbb R$-divisor on $X$. 
Then we can define the {\em{multiplier ideal sheaf}} 
$\mathcal J(X, B)$. 
By the definition, $(X, B)$ is klt if and only if 
$\mathcal J(X, B)$ is trivial. 
There exist plenty of applications 
of multiplier ideal sheaves. 
See, for example, the excellent book \cite{lazarsfeld}. 
Here, we introduce the notion of 
{\em{non-lc ideal sheaves}}. 
We denote it by $\mathcal J_{NLC}(X, B)$. 
By the construction, the ideal sheaf $\mathcal J_{NLC}(X, B)$ is trivial 
if and only if $(X, B)$ is lc, that is, $\mathcal J_{NLC}(X, B)$ 
defines 
the non-lc locus of the pair $(X, B)$. 
So, we call $\mathcal J_{NLC}(X, B)$ the {\em{non-lc ideal 
sheaf}} associated to $(X, B)$. 
By the definition of 
$\mathcal J_{NLC}(X, B)$ (cf.~Definition \ref{21}), we have 
the following inclusions 
$$
\mathcal J(X, B)\subset \mathcal J_{NLC}(X, B)\subset 
\mathcal J(X, (1-\varepsilon)B)
$$ 
for every $\varepsilon >0$. Although 
the ideal sheaf $\mathcal J(X, (1-\varepsilon)B)$ 
defines the non-lc locus of the pair 
$(X, B)$ for 
$0<\varepsilon \ll 1$, 
$\mathcal J(X, (1-\varepsilon)B)$ does not 
always coincide 
with $\mathcal J_{NLC}(X, B)$. 
It is a very important remark. 
In \cite{fuji-taka}, we will discuss various other ideal sheaves which define 
the non-lc locus of $(X, B)$. 
 
Let $S$ be a smooth 
irreducible divisor on $X$ such that 
$S$ is not contained in the support of $B$. 
We put $B_S=B|_S$. 
The restriction theorem for multiplier ideal 
sheaves, which was obtained by Esnault--Viehweg, 
is one of the key results 
in the theory of 
multiplier ideal sheaves. 
From the analytic point of view, it is a direct consequence of 
the Ohsawa--Takegoshi $L^2$ extension theorem (see \cite{ot}). 
For the details, see \cite{kollar} and \cite{lazarsfeld}. 
Let us recall the restriction theorem here for the reader's convenience. 
\begin{thm}[Restriction Theorem for Multiplier Ideal Sheaves] 
We have an inclusion 
$$
\mathcal J(S, B_S)\subseteq \mathcal J(X, B)|_S. 
$$ 
\end{thm}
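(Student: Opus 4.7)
My plan is the standard Esnault--Viehweg--Kollár strategy: pick a common log resolution of $(X, S+B)$, use a short exact sequence on the resolution together with a relative Kawamata--Viehweg vanishing to obtain a surjection, and then apply adjunction on the resolution to identify the target sheaf with something containing $\mathcal J(S, B_S)$.

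Fix a log resolution $f\colon Y\to X$ of $(X,S+B)$ such that the strict transform $S'=f^{-1}_*S$ is smooth, $f$ is an isomorphism over the generic point of $S$, and $g:=f|_{S'}\colon S'\to S$ is itself a log resolution of $(S,B_S)$. Set $D=K_{Y/X}-\lfloor f^*B\rfloor$, so that $\mathcal J(X, B) = f_*\mathcal O_Y(D)$ by definition. The crux is the vanishing $R^1 f_*\mathcal O_Y(D-S')=0$. I obtain it by rewriting
$$D - S' = K_{Y/X} - \lfloor f^*(B+S)\rfloor + (f^*S-S'),$$
where $f^*S - S'$ is effective and $f$-exceptional; here $K_{Y/X} - \lfloor f^*(B+S)\rfloor$ is precisely the sheaf to which the standard local vanishing for the log resolution $f$ of $(X, S+B)$ applies, and the required vanishing for $D - S'$ follows by a short inductive argument that absorbs the effective exceptional correction $f^*S - S'$.

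Applying $f_*$ to the short exact sequence
$$0\to\mathcal O_Y(D-S')\to\mathcal O_Y(D)\to\mathcal O_{S'}(D|_{S'})\to 0$$
together with the vanishing yields a surjection $\mathcal J(X,B)\twoheadrightarrow g_*\mathcal O_{S'}(D|_{S'})$. An adjunction-and-pullback calculation using $K_Y|_{S'}=K_{S'}-S'|_{S'}$, $f^*K_X|_{S'}=g^*(K_X|_S)$, $g^*(S|_S)=S'|_{S'}+(f^*S-S')|_{S'}$, and $f^*B|_{S'}=g^*B_S$ (the last equality uses $S\not\subset\Supp B$) then gives
$$D|_{S'} = K_{S'/S} - \lfloor g^*B_S\rfloor + F,\qquad F:=(f^*S-S')|_{S'}\geq 0.$$
Consequently $g_*\mathcal O_{S'}(D|_{S'})\supseteq g_*\mathcal O_{S'}(K_{S'/S}-\lfloor g^*B_S\rfloor)=\mathcal J(S,B_S)$. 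Combining this with the surjection, every section of $\mathcal J(S,B_S)$ lifts to a section of $\mathcal J(X, B)\subseteq\mathcal O_X$, whose restriction to $S$ is then a section of $\mathcal J(X,B)|_S\subseteq\mathcal O_S$, giving the desired inclusion.

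The main obstacle is the vanishing step: the standard local vanishing does not directly apply to $D-S'$ since $-S'$ is neither $f$-exceptional nor a rounding artifact, so one has to recognize $D-S'$ as the local-vanishing sheaf for the pair $(X, S+B)$ twisted by the effective $f$-exceptional divisor $f^*S - S'$ and verify that the vanishing persists under this twist. Once this reorganization is in place, adjunction and formal manipulation deliver the rest.
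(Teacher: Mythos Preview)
The paper does not give its own proof of this statement; it is recalled in the introduction as a classical result of Esnault--Viehweg, with references to the literature. So the question is whether your argument stands on its own.

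Your overall strategy is the standard one, but the vanishing step contains a genuine error. The claim that $R^1 f_*\mathcal O_Y(D-S')=0$ is \emph{false} in general, and the ``short inductive argument that absorbs the effective exceptional correction $f^*S-S'$'' does not exist: adding an effective $f$-exceptional divisor to a sheaf satisfying local vanishing does not, in general, preserve the vanishing. For a concrete counterexample take $X$ a smooth surface, $S\subset X$ a smooth curve, $B=0$, and let $f\colon Y\to X$ be the blow-up of a point $p\in S$ with exceptional curve $E$. Then $D=K_{Y/X}=E$ and $D-S'=E-S'$, while $\mathcal O_Y(E-S')|_E\cong\mathcal O_{\mathbb P^1}(-2)$, so $R^1f_*\mathcal O_Y(E-S')$ is the length-one skyscraper at $p$. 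Correspondingly the map $\mathcal J(X,B)=\mathcal O_X\to g_*\mathcal O_{S'}(D|_{S'})\cong\mathcal O_S(p)$ is \emph{not} surjective; its image is only $\mathcal O_S$. This $f$ satisfies every hypothesis you impose on the resolution, so the gap cannot be closed by choosing $f$ more carefully.

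The remedy is to shift the entire short exact sequence by $-E'$, where $E'=f^*S-S'$, rather than trying to absorb $E'$ afterwards. Set $D^\circ:=D-E'$. Then $D^\circ-S'=K_{Y/X}-\lfloor f^*(S+B)\rfloor$ is exactly the local-vanishing divisor for the pair $(X,S+B)$, so $R^1f_*\mathcal O_Y(D^\circ-S')=0$ with no further work, and the sequence
\[
0\to\mathcal O_Y(D^\circ-S')\to\mathcal O_Y(D^\circ)\to\mathcal O_{S'}(D^\circ|_{S'})\to 0
\]
gives a surjection $f_*\mathcal O_Y(D^\circ)\twoheadrightarrow g_*\mathcal O_{S'}(D^\circ|_{S'})$. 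By your own adjunction computation $D^\circ|_{S'}=D|_{S'}-F=K_{S'/S}-\lfloor g^*B_S\rfloor$, so the target is precisely $\mathcal J(S,B_S)$; and since $E'\geq 0$ one has $f_*\mathcal O_Y(D^\circ)\subseteq f_*\mathcal O_Y(D)=\mathcal J(X,B)$. As the map to the quotient is just restriction of functions to $S$, this surjection shows every local section of $\mathcal J(S,B_S)$ lifts to $\mathcal J(X,B)$, which is the desired inclusion. (The intermediate ideal $f_*\mathcal O_Y(D^\circ)$ is the adjoint ideal of $S$ in $(X,B)$, and this is how the algebraic proof is usually organized.)
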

The main result of this paper 
is the following restriction theorem for non-lc ideal sheaves. 
For the precise statement, see Theorem \ref{main}. 

\begin{thm}
There is an equality 
$$
\mathcal J_{NLC}(S, B_S)= \mathcal J_{NLC}(X, S+B)|_S. 
$$
In particular, $(S, B_S)$ is lc if and only if $(X, S+B)$ is lc around $S$. 
\end{thm}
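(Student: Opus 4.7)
The plan is to reduce to a common log resolution, where both non-lc ideal sheaves become pushforwards of a single line bundle, and then to extract the equality from the long exact sequence associated to restriction to the strict transform of $S$.

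First I would choose a projective log resolution $f\colon Y\to X$ of $(X,S+B)$ such that $S':=f^{-1}_{*}S$ is smooth, $S'$ appears in $D_Y:=S'+B_Y$ with coefficient exactly $1$ (where $K_Y+D_Y=f^{*}(K_X+S+B)$), and the induced morphism $g:=f|_{S'}\colon S'\to S$ is a log resolution of $(S,B_S)$. Ordinary adjunction then gives $K_{S'}+B_Y|_{S'}=g^{*}(K_S+B_S)$. Setting
\[
L:=\lceil -D_Y\rceil+(D_Y)^{=1},
\]
the identity $\lceil-1\rceil+1=0$ shows that $S'$ does not occur in $L$, and snc together with transversality yields
\[
f_{*}\mathcal O_Y(L)=\mathcal J_{NLC}(X,S+B),\qquad g_{*}\mathcal O_{S'}(L|_{S'})=\mathcal J_{NLC}(S,B_S),
\]
so both ideals are controlled by the single divisor $L$ on $Y$.

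I would then apply $f_{*}$ to the tautological short exact sequence $0\to\mathcal O_Y(L-S')\to\mathcal O_Y(L)\to\mathcal O_{S'}(L|_{S'})\to 0$, producing
\[
\mathcal J_{NLC}(X,S+B)\xrightarrow{\rho}\mathcal J_{NLC}(S,B_S)\xrightarrow{\delta}R^{1}f_{*}\mathcal O_Y(L-S').
\]
A direct calculation rewrites $L-S'\sim_{\mathbb Z}K_Y+\Delta-f^{*}(K_X+S+B)$, where $\Delta:=\{B_Y\}+(B_Y)^{=1}$ is an effective snc $\mathbb R$-divisor whose coefficients lie in $[0,1]$; in particular $(Y,\Delta)$ is sub-log-canonical. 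Invoking the torsion-free/vanishing theorem of Ambro and the author for sub-lc pairs, every associated prime of $R^{1}f_{*}\mathcal O_Y(L-S')$ is the generic point of the $f$-image of some lc stratum of $(Y,\Delta)$. Since $S'$ has been excluded from $\Delta$ by construction, and after a suitable refinement of $f$ one can arrange that no other lc stratum of $(Y,\Delta)$ maps into $S$, no associated prime of $R^{1}f_{*}\mathcal O_Y(L-S')$ lies in $S$. But $\mathcal J_{NLC}(S,B_S)$ is a torsion $\mathcal O_X$-module supported on $S$, so the connecting map $\delta$ must vanish, and $\rho$ is surjective.

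The final step is to identify the image of $\rho$ with the scheme-theoretic restriction $\mathcal J_{NLC}(X,S+B)|_S$, which is a formal naturality statement about pushforward and restriction. The ``in particular'' statement is then immediate, as each side becomes the unit sheaf precisely when the corresponding pair is log canonical on the relevant locus. The principal obstacle is the vanishing of $\delta$: plain Kawamata--Viehweg does not apply because $\Delta$ contains reduced components, so one must deploy the full Ambro--Fujino framework for sub-lc pairs, and it takes genuine care with the choice of resolution to prevent non-$S'$ lc strata of $(Y,\Delta)$ from mapping into $S$. This is the technical heart of the theorem and is precisely what promotes the classical one-sided inclusion for multiplier ideals to the equality stated above.
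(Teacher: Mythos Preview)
Your outline has a genuine gap at precisely the step you flag as the technical heart.  The claim that ``after a suitable refinement of $f$ one can arrange that no other lc stratum of $(Y,\Delta)$ maps into $S$'' is false in general, and refining the resolution in fact goes in the wrong direction: further blow-ups can only create more exceptional divisors of discrepancy $-1$ over $S$, not fewer.  Concretely, let $X=\{xy=z^2\}\subset\mathbb C^3$ be the quadric cone, $S=\{x=z=0\}$ and $B=\{y=z=0\}$.  Every resolution of $X$ dominates the minimal one, so the exceptional curve $E$ over the vertex is always extracted; one computes $a(E,X,S+B)=-1$, so $E$ is a component of $\Delta^{=1}$ with $f(E)\subset S$ on \emph{every} log resolution.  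Thus there is no choice of $f$ for which the torsion-free theorem alone forces your connecting map $\delta$ to vanish.

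The paper's proof addresses exactly this obstruction.  Instead of restricting to $S_Y$ alone, it restricts to $S_Y+T$, where $T$ collects all discrepancy $-1$ divisors mapped into $S$; by construction no stratum of $(Y,\{B_Y\}+B_Y^{=1}-S_Y-T)$ lands in $S$, so the torsion-free theorem now does kill the connecting homomorphism and yields $\mathcal J_{NLC}(X,S+B)|_S=f_*\mathcal O_{S_Y+T}(A-N)$.  The remaining (and essential) work is to identify this with $f_*\mathcal O_{S_Y}(A-N)=\mathcal J_{NLC}(S,B_S)$; this requires splitting $T=T_1+T_2$ according to whether components map into $\Nlc(X,S+B)$, and ultimately invoking Kawakita's inversion of adjunction on log canonicity to show $f_*\mathcal O_{S_Y}(A-N-T_2)=f_*\mathcal O_{S_Y}(A-N)$.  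Your proposal does not mention Kawakita's theorem at all, yet it is indispensable here: the surjectivity of your map $\rho$ is in fact \emph{equivalent} to the stated equality, and the paper's route through $T$ together with Kawakita is what actually establishes it.
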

Once we obtain this powerful restriction theorem for non-lc ideal 
sheaves, we can translate some results 
for multiplier ideal sheaves 
into new results for non-lc ideal sheaves. 
We will prove, for example,  
subadditivity theorem for 
non-lc ideal sheaves. 
I think that the ideal sheaf $\mathcal J_{NLC}(X, B)$ has already 
appeared implicitly in some papers. 
However, $\mathcal J_{NLC}(X, B)$ was thought to be 
useless because the Kawamata--Viehweg--Nadel vanishing theorem does 
not hold for lc pairs. 
We note that the theory of multiplier ideal sheaves 
heavily depends on the Kawamata--Viehweg--Nadel vanishing 
theorem. 
Fortunately, we have a new cohomological package 
according to Ambro's formulation, which works 
for lc pairs (see \cite[Chapter 2]{fujino-book}). 
By this new package, 
we can walk around freely in the world of lc pairs. 
We will prove vanishing theorem and global generation 
for non-lc ideal sheaves 
as applications. 
I hope that the notion of non-lc ideal sheaves will play important roles 
in various applications. 
In \cite{fuji-f}, we prove the cone and contraction theorem 
for a pair $(X, B)$ where $X$ is a normal variety and 
$B$ is an effective $\mathbb R$-divisor 
on $X$ such that $K_X+B$ is $\mathbb R$-Cartier. 
In that paper, we repeatedly use non-lc ideal sheaves. We note 
that the restriction theorem (cf.~Theorem \ref{main}) is not necessary 
in \cite{fuji-f}. 
We only use the basic properties of non-lc ideal sheaves. 

We summarize the contents of this paper. 
In Section \ref{sec2}, we introduce the notion 
of non-lc ideal sheaves and give 
various examples. 
Then we prove the restriction theorem for non-lc 
ideal sheaves. 
It produces 
the subadditivity theorem for non-lc ideal sheaves, and so on. 
Our proof of the restriction theorem 
is quite different from 
the standard arguments in the theory of multiplier ideal 
sheaves in \cite{lazarsfeld}. 
It also differs from the usual X-method, which was initiated 
by Kawamata and is the most important technique in the 
traditional log minimal model program. 
So, we will explain the proof of the restriction theorem 
very carefully. 
In Section \ref{sec3}, we prove the vanishing 
theorem and the global generation 
for (asymptotic) 
non-lc ideal sheaves. 
Section \ref{sec4} is an appendix, where 
we quickly review Kawakita's inversion of 
adjunction on log canonicity and 
the new cohomological package (cf.~\cite[Chapter 2]{fujino-book}). 

\begin{ack}
I was partially supported by the Grant-in-Aid for Young Scientists 
(A) $\sharp$20684001 from JSPS. I was 
also supported by the Inamori Foundation. 
I am grateful to Professor Hiromichi Takagi for his warm 
encouragement. 
Almost all the results were obtained during my short trip to 
Germany with the book \cite{lazarsfeld} in 
the autumn of 2007. 
I thank the Max-Planck Institute for Mathematics and 
Mathematisches Forschungsinstitut Oberwolfach for 
their hospitality. I am grateful to Vladimir Lazi\'c and 
Masayuki Kawakita for useful 
comments. I also thank Professors 
Mircea Musta\c t\u a 
and Mihnea Popa for 
pointing out mistakes. 
I am grateful to the Mathematical Sciences Research 
Institute for its hospitality. I would like to 
thank Shunsuke Takagi and Karl Schwede for many questions, 
suggestions, and useful comments. 
\end{ack}

\subsection{Notation and Conventions}
We will work over the complex number field $\mathbb C$ throughout 
this paper. 
But we note that by using the Lefschetz principle, we can 
extend everything to the case where 
the base field is 
an algebraically closed field of characteristic 
zero. 
We closely follow the presentation of the 
excellent book \cite{lazarsfeld} 
in order to 
make this paper 
more accessible. 
We will use the following notation freely. 

\begin{notation}
(i) For an $\mathbb R$-Weil divisor 
$D=\sum _{j=1}^r d_j D_j$ such that 
$D_i$ is a prime divisor for every $i$ and 
$D_i\ne D_j$ for $i\ne j$, we define 
the {\em{round-up}} $\ulcorner D\urcorner =\sum _{j=1}^{r} 
\ulcorner d_j\urcorner D_j$ 
(resp.~the {\em{round-down}} $\llcorner D\lrcorner 
=\sum _{j=1}^{r} \llcorner d_j \lrcorner D_j$), 
where for every real number $x$, 
$\ulcorner x\urcorner$ (resp.~$\llcorner x\lrcorner$) is 
the integer defined by $x\leq \ulcorner x\urcorner <x+1$ 
(resp.~$x-1<\llcorner x\lrcorner \leq x$). 
The {\em{fractional part}} $\{D\}$ 
of $D$ denotes $D-\llcorner D\lrcorner$. 
We define 
\begin{align*}&
D^{=1}=\sum _{d_j=1}D_j, \ \ D^{\leq 1}=\sum_{d_j\leq 1}d_j D_j, \\ &
D^{<1}=\sum_{d_j< 1}d_j D_j, \ \ \text{and}\ \ \ 
D^{>1}=\sum_{d_j>1}d_j D_j. 
\end{align*}
We call $D$ a {\em{boundary}} 
$\mathbb R$-divisor if 
$0\leq d_j\leq 1$ 
for every $j$. 
We note that $\sim _{\mathbb Q}$ (resp.~$\sim _{\mathbb R}$) 
denotes the $\mathbb Q$-linear (resp.~$\mathbb R$-linear) equivalence 
of $\mathbb Q$-divisors (resp.~$\mathbb R$-divisors). 

(ii) For a proper birational morphism $f:X\to Y$, 
the {\em{exceptional locus}} $\Exc (f)\subset X$ is the locus where 
$f$ is not an isomorphism. 

(iii) Let $X$ be a normal variety and $B$ an effective $\mathbb R$-divisor 
on $X$ such that $K_X+B$ is $\mathbb R$-Cartier. 
Let $f:Y\to X$ be a resolution such that 
$\Exc (f)\cup f^{-1}_*B$ has a simple normal crossing 
support, where $f^{-1}_*B$ is the strict transform of $B$ on $Y$. 
We write $$K_Y=f^*(K_X+B)+\sum _i a_i E_i$$ and 
$a(E_i, X, B)=a_i$. 
We say that $(X, B)$ is {\em{lc}} (resp.~{\em{klt}}) if and only if 
$a_i\geq -1$ (resp.~$a_i>-1$) for every $i$. 
Note that the {\em{discrepancy}} $a(E, X, B)\in \mathbb R$ can be 
defined for every prime divisor $E$ {\em{over}} $X$. 
By the definition, there exists the largest Zariski open set 
$U$ of $X$ such that $(X, B)$ 
is lc on $U$. 
We put $\Nlc(X, B)=X\setminus U$ and call it 
the {\em{non-lc locus}} of the pair 
$(X, B)$. 
We sometimes simply denote $\Nlc (X, B)$ by $X_{NLC}$. 

(iv) Let $E$ be a prime divisor over $X$. The closure 
of the image of $E$ on $X$ is denoted by 
$c_X(E)$ and 
called the {\em{center}} of $E$ on $X$. 

We use the same notation as in (iii). 
If $a(E, X, B)=-1$ and 
$c_X(E)$ is not contained in $\Nlc(X, B)$, then 
$c_X(E)$ is called 
an {\em{lc center}} of $(X, B)$. We note that 
our definition 
of lc centers is slightly different from the usual one. 
\end{notation}

\section{Non-lc Ideal Sheaves}\label{sec2}

\subsection{Definitions of Non-lc Ideal Sheaves} 

Let us introduce the notion of {\em{non-lc ideal sheaves}}. 
\begin{defn}[Non-lc ideal sheaf]\label{21}
Let $X$ be a normal variety and $\Delta$ 
an $\mathbb R$-divisor on $X$ such that 
$K_X+\Delta$ is $\mathbb R$-Cartier. 
Let $f:Y\to X$ be a resolution with $K_Y+\Delta _Y=f^*(K_X+\Delta)$ 
such that 
$\Supp \Delta _Y$ is simple normal crossing. 
Then we put 
$$\mathcal J_{NLC}(X, \Delta)=f_*\mathcal O_Y(\ulcorner -(\Delta_Y^{<1})\urcorner-
\llcorner \Delta_Y^{>1}\lrcorner)=f_*\mathcal O_Y(-\llcorner \Delta_Y\lrcorner 
+\Delta^{=1}_Y)$$ and 
call it the {\em{non-lc ideal sheaf associated to $(X, \Delta)$}}. 
\end{defn}

The name comes from the following obvious lemma. 
See also Proposition \ref{2626}. 

\begin{lem}
Let $X$ be a normal variety and $\Delta$ an effective 
$\mathbb R$-divisor 
such that $K_X+\Delta$ is $\mathbb R$-Cartier. 
Then $(X, \Delta)$ is lc if and only if $\mathcal J_{NLC}(X, \Delta)=\mathcal O_X$. 
\end{lem}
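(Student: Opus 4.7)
The strategy is direct from the definition: setting $A := -\llcorner \Delta_Y\lrcorner + \Delta_Y^{=1}$, so that $\mathcal J_{NLC}(X,\Delta) = f_*\mathcal O_Y(A)$, I would reduce the equivalence to a coefficient-by-coefficient analysis of $A$, combined with the standard identity $f_*\mathcal O_Y(F) = \mathcal O_X$ valid for any effective $f$-exceptional divisor $F$ on $Y$ (using that $X$ is normal).

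For the ``lc $\Rightarrow$ trivial'' direction, I would assume every component $D_i$ of $\Delta_Y$ has coefficient $d_i \leq 1$. A case analysis on the coefficient of $D_i$ in $A$---it equals $0$ when $d_i = 1$, equals $-\llcorner d_i\lrcorner = 0$ when $0 \leq d_i < 1$, and equals $-\llcorner d_i\lrcorner > 0$ when $d_i < 0$---shows that $A$ is effective. Moreover, the non-exceptional components of $\Delta_Y$ are the strict transforms of the prime components of $\Supp\Delta$, whose coefficients lie in $[0,1]$ by effectivity of $\Delta$ together with the lc hypothesis, and therefore contribute $0$ to $A$. Hence $A$ is effective and $f$-exceptional, so the push-forward identity yields $\mathcal J_{NLC}(X,\Delta) = \mathcal O_X$.

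For the converse I would argue the contrapositive. If $(X,\Delta)$ is not lc, then by the standard fact that log-canonicity can be checked on any single log resolution, some component $D_{i_0}$ of $\Delta_Y$ already satisfies $d_{i_0} > 1$; its coefficient in $A$ is then $-\llcorner d_{i_0}\lrcorner \leq -1$. Picking any point $p \in f(D_{i_0})$, suppose that $1 \in \mathcal J_{NLC}(X,\Delta)_p$. Then the constant function $1$ would have to lie in $\Gamma(f^{-1}(U), \mathcal O_Y(A))$ for some open neighborhood $U$ of $p$, forcing $A|_{f^{-1}(U)} \geq 0$. But $D_{i_0} \cap f^{-1}(U) \neq \emptyset$ and $D_{i_0}$ appears with negative coefficient in $A$, a contradiction. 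Hence $\mathcal J_{NLC}(X,\Delta)_p \neq \mathcal O_{X,p}$, so $\mathcal J_{NLC}(X,\Delta) \neq \mathcal O_X$.

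There is no serious obstacle here: the proof reduces to bookkeeping with round-downs together with two standard inputs, namely the push-forward identity for effective $f$-exceptional divisors and the invariance of the lc property across log resolutions. The only subtle point worth flagging is the latter---the descent of ``some divisor over $X$ has discrepancy $< -1$'' to ``some component of our chosen $\Delta_Y$ has coefficient $> 1$''---which I would invoke rather than reprove.
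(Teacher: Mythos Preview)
Your argument is correct. The paper itself gives no proof of this lemma; it is introduced with the phrase ``the name comes from the following obvious lemma'' and left at that. What you have written is exactly the bookkeeping the author has in mind: in the lc case every coefficient of $A=-\llcorner\Delta_Y\lrcorner+\Delta_Y^{=1}$ is nonnegative and the nonzero ones are $f$-exceptional, so $f_*\mathcal O_Y(A)=\mathcal O_X$ by normality of $X$; in the non-lc case some coefficient of $\Delta_Y$ exceeds $1$, forcing a strictly negative coefficient in $A$ and hence $1\notin\mathcal J_{NLC}(X,\Delta)_p$ at any point $p$ in the image of that component. Your flagged subtlety---that non-lc is detected on the chosen log resolution---is the only nontrivial input, and it is indeed standard.
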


\begin{rem}
In the same notation as in Definition \ref{21}, 
we put 
$$
\mathcal J(X, \Delta)=f_*\mathcal O_Y(-\llcorner \Delta_Y\lrcorner)
=f_*\mathcal O_Y(K_Y-\llcorner f^*(K_X+\Delta)\lrcorner). 
$$ 
It is nothing but the well-known {\em{multiplier ideal sheaf}}. 
It is obvious that $\mathcal J(X, \Delta)\subseteq \mathcal J_{NLC}(X, \Delta)$. 
\end{rem}

\begin{que}
Let $X$ be a smooth algebraic variety and $\Delta$ an effective 
$\mathbb R$-divisor on $X$. 
Are there any analytic interpretations of $\mathcal J_{NLC}(X, \Delta)^{an}$? 
Are there any approaches to $\mathcal J_{NLC}(X, \Delta)$ 
from the theory of tight closure? 
\end{que}

\begin{defn}[Non-lc ideal sheaf associated to an ideal sheaf]
Let $X$ be a normal variety and $\Delta$ an 
$\mathbb R$-divisor on $X$ such that 
$K_X+\Delta$ is $\mathbb R$-Cartier. 
Let $\mathfrak a\subseteq \mathcal O_X$ be a non-zero 
ideal sheaf on $X$ and $c$ a real 
number. 
Let $f:Y\to X$ be a resolution such that 
$K_Y+\Delta_Y=f^*(K_X+\Delta)$ and 
that 
$f^{-1}\mathfrak a =\mathcal O_Y(-F)$, 
where 
$\Supp (\Delta_Y+F)$ has a simple normal crossing support. 
We put 
$$
\mathcal J_{NLC}((X, \Delta); \mathfrak a^c)=f_*\mathcal 
O_Y(\ulcorner -((\Delta _Y+cF)^{<1})\urcorner 
-\llcorner (\Delta_Y+cF)^{>1}\lrcorner).  
$$ 
We sometimes write $\mathcal J_{NLC}((X, \Delta); c\cdot \mathfrak a)
=\mathcal J_{NLC}((X, \Delta); \mathfrak a^c)$. 
\end{defn}

\begin{prop}\label{2626}
The ideal sheaves $\mathcal J_{NLC}(X, \Delta)$ and $\mathcal J_{NLC}
((X, \Delta); \mathfrak a^c)$ are well-defined, that is, 
they are independent of the resolution $f:Y\to X$. 
If $\Delta$ is effective and $c>0$, 
then $\mathcal J_{NLC}(X, \Delta)\subseteq \mathcal O_X$ 
and $\mathcal J_{NLC}((X, \Delta); \mathfrak a^c)\subseteq \mathcal O_X$. 
\end{prop}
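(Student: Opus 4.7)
The plan is to follow the well-known strategy for multiplier ideal sheaves, carrying the component $\Delta^{=1}$ through the calculation and replacing Kawamata--Viehweg--Nadel vanishing by the Ambro--Fujino vanishing on sub-lc pairs from Section~\ref{sec4}. First, for independence of the resolution, I would take two log resolutions $f_1:Y_1\to X$ and $f_2:Y_2\to X$ and dominate them by a common log resolution $g:Z\to X$ with $p_i:Z\to Y_i$ satisfying $f_i\circ p_i=g$. The question reduces to the following claim: for any proper birational morphism $p:Z\to Y$ between smooth varieties such that $\Supp \Delta_Y$ and $\Supp \Delta_Z$ are both simple normal crossing (with $K_Z+\Delta_Z=p^*(K_Y+\Delta_Y)$),
$$p_*\mathcal O_Z\bigl(-\llcorner \Delta_Z\lrcorner+\Delta_Z^{=1}\bigr)=\mathcal O_Y\bigl(-\llcorner \Delta_Y\lrcorner+\Delta_Y^{=1}\bigr).$$
An identical reduction applies to $\mathcal J_{NLC}((X,\Delta);\mathfrak a^c)$ after replacing $\Delta$ by $\Delta+cF$.

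To prove the displayed equality, I would rewrite each side as $\mathcal O(K-\llcorner K+\Delta\lrcorner+\Delta^{=1})$ and use the identity
$$\llcorner p^*(K_Y+\Delta_Y)\lrcorner=p^*\llcorner K_Y+\Delta_Y\lrcorner+\llcorner p^*\{K_Y+\Delta_Y\}\lrcorner$$
to split off the Cartier part. By the projection formula the equality reduces to a statement of the form
$$p_*\mathcal O_Z\bigl(K_Z+\ulcorner -p^*\{K_Y+\Delta_Y\}\urcorner+\Delta_Z^{=1}\bigr)=\mathcal O_Y\bigl(K_Y+\Delta_Y^{=1}\bigr)$$
together with the vanishing of the corresponding $R^ip_*$ for $i>0$. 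The divisor inside the pushforward on $Z$ is a sub-boundary snc divisor whose ``$=1$'' part is $\Delta_Z^{=1}$, so this is precisely the relative situation handled by the Ambro--Fujino cohomological package. Classical Kawamata--Viehweg--Nadel vanishing is not available here because $\Delta_Y$ may carry components with coefficient $>1$, and securing this vanishing is the main obstacle of the whole argument.

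Finally, for the inclusions $\mathcal J_{NLC}(X,\Delta)\subseteq\mathcal O_X$ and $\mathcal J_{NLC}((X,\Delta);\mathfrak a^c)\subseteq\mathcal O_X$ when $\Delta\geq 0$ and $c>0$, let $U\subseteq X$ be the dense open set over which $f$ is an isomorphism. Over $U$ every coefficient of $\Delta_Y$ (resp.\ $\Delta_Y+cF$) along a non-$f$-exceptional prime is $\geq 0$, so $-\llcorner \Delta_Y\lrcorner+\Delta_Y^{=1}\leq 0$ on $f^{-1}(U)$, giving an embedding into $\mathcal O_{f^{-1}(U)}$. Consequently a local section of the pushforward on an open $V\subseteq X$ is a rational function on $V$ which is regular in codimension one, and the normality of $X$ forces it to lie in $\mathcal O_X(V)$.
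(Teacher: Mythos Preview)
Your reduction to a statement about a proper birational morphism $p:Z\to Y$ between smooth varieties with simple normal crossing support, together with the final paragraph on the inclusion in $\mathcal O_X$, is correct and is exactly how the paper proceeds: Proposition \ref{2626} is declared to follow from Lemma \ref{taisetsu}, which is precisely your displayed equality.

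The gap is in how you propose to prove that key lemma. You write that the equality ``reduces to'' a certain $p_*$-identity ``together with the vanishing of the corresponding $R^ip_*$ for $i>0$'', and that this ``is precisely the relative situation handled by the Ambro--Fujino cohomological package''. But Theorem \ref{ap1} only gives torsion-freeness of $R^qf_*$ and vanishing of $R^p\pi_*R^qf_*$; it does not compute the zeroth direct image, and the higher vanishing you invoke is irrelevant to the $p_*$ equality you need. Your rewriting via the projection formula is correct but merely restates the problem; it does not reduce it to something the package proves. In particular, the assertion that ``securing this vanishing is the main obstacle of the whole argument'' is a misconception: no vanishing theorem is needed at all, just as none is needed for the well-definedness of ordinary multiplier ideals.

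The paper's proof of Lemma \ref{taisetsu} is an elementary discrepancy calculation. One rewrites $K_Z+B_Z=p^*(K_Y+B_Y)$ as
\[
K_Z+B_Z^{=1}+\{B_Z\}=p^*\bigl(K_Y+B_Y^{=1}+\{B_Y\}\bigr)+E,\qquad E:=p^*\bigl(\llcorner B_Y^{<1}\lrcorner+\llcorner B_Y^{>1}\lrcorner\bigr)-\bigl(\llcorner B_Z^{<1}\lrcorner+\llcorner B_Z^{>1}\lrcorner\bigr),
\]
and shows that the Cartier divisor $E$ is effective and $p$-exceptional. Indeed $(Y,B_Y^{=1}+\{B_Y\})$ is lc (snc with coefficients in $[0,1]$), so every discrepancy $a(\nu,Y,B_Y^{=1}+\{B_Y\})$ is $\geq -1$; when it equals $-1$, \cite[Lemma 2.45]{km} forces $a(\nu,Y,B_Y)=-1$ as well, so $\nu$ appears in $B_Z^{=1}$ and the coefficient of $E$ along $\nu$ comes out $\geq 0$. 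Then $p_*\mathcal O_Z(E)=\mathcal O_Y$ gives the lemma by the projection formula. The only new ingredient beyond the classical multiplier-ideal argument is this matching of the ``$=1$'' parts, and it is a combinatorial fact about snc pairs rather than a cohomological one.
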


This proposition follows from the next fundamental lemma. 

\begin{lem}\label{taisetsu}
Let $f:Z\to Y$ be a proper birational morphism between 
smooth varieties and $B_Y$ an 
$\mathbb  R$-divisor on $Y$ such 
that $\Supp B_Y$ is simple normal crossing. 
Assume that $K_Z+B_Z=f^*(K_Y+B_Y)$ and 
that $\Supp B_Z$ is simple normal crossing. 
Then we have $$f_*\mathcal O_Z(\ulcorner -(B^{<1}_Z)\urcorner 
-\llcorner B^{>1}_Z\lrcorner)\simeq 
\mathcal O_Y(\ulcorner -(B^{<1}_Y)\urcorner 
-\llcorner B^{>1}_Y\lrcorner). $$ 
\end{lem}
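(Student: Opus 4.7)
The plan is to show that
\[
F := \bigl(-\llcorner B_Z\lrcorner + B_Z^{=1}\bigr) \;-\; f^*\bigl(-\llcorner B_Y\lrcorner + B_Y^{=1}\bigr)
\]
is an effective $f$-exceptional divisor on $Z$; then the projection formula combined with the elementary fact $f_*\mathcal O_Z(F')=\mathcal O_Y$ for any effective $f$-exceptional $F'$ (which uses only the normality of $Y$, by Hartogs) immediately yields the desired isomorphism. Note that coefficient by coefficient $\ulcorner-(B^{<1})\urcorner-\llcorner B^{>1}\lrcorner$ equals $-\llcorner B\lrcorner + B^{=1}$, so this reformulation matches the statement. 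To make the computation transparent I introduce the sub-boundary SNC divisors $\Delta := \{B_Z\} + B_Z^{=1}$ on $Z$ and $\Delta_Y := \{B_Y\} + B_Y^{=1}$ on $Y$, both of which are lc (coefficients in $[0,1]$, SNC support on a smooth ambient space), and rewrite the given identity $K_Z+B_Z = f^*(K_Y+B_Y)$ as
\[
K_Z + \Delta \;=\; f^*(K_Y+\Delta_Y) + F,
\]
so $F$ is the log discrepancy divisor of $(Z,\Delta)$ over $(Y,\Delta_Y)$.

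On a non-exceptional prime $P$ of $Z$, $f$ is a local isomorphism near the generic point of $P$, so $\mathrm{ord}_P(B_Z)$ equals the coefficient of $B_Y$ along $f(P)$ and $\mathrm{ord}_P(F) = 0$ by direct comparison. For an $f$-exceptional prime $P$, set $a_P = \mathrm{ord}_P(K_Z-f^*K_Y)$, and for each component $B_{Y,i}$ of $B_Y$ with coefficient $b_i$ let $n_i = \mathrm{ord}_P(f^*B_{Y,i})$, $\phi_P = \sum_i \{b_i\}\, n_i$, and $\epsilon_P = \sum_{b_i=1} n_i$. A direct manipulation (using that $a_P$, $\epsilon_P$, and each $\llcorner b_i\lrcorner n_i$ are integers) yields
\[
\mathrm{ord}_P(F) \;=\; a_P - \llcorner\phi_P\lrcorner - \epsilon_P + [\mathrm{ord}_P(B_Z)=1],
\]
where $[\,\cdot\,]$ denotes the Iverson bracket. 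Since $(Y,\Delta_Y)$ is lc we have $a_P - \phi_P - \epsilon_P \geq -1$, which rounds down (thanks to $\epsilon_P\in\mathbb Z$) to $\llcorner\phi_P\lrcorner + \epsilon_P \leq a_P + 1$, giving $\mathrm{ord}_P(F) \geq 0$ at once in every non-borderline situation.

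The main obstacle is the borderline case $\llcorner\phi_P\lrcorner + \epsilon_P = a_P+1$. Equality forces $\phi_P\in\mathbb Z$ and $\phi_P + \epsilon_P = a_P + 1$, so $P$ is an lc place of $(Y,\Delta_Y)$. By the standard description of lc centers for SNC pairs on smooth varieties, $c_Y(P)$ is then a stratum $\bigcap_l B_{Y,j_l}$ of $B_Y^{=1}$, and a codimension count at a generic point of this stratum (using the SNC hypothesis on $B_Y$) shows it cannot be contained in any further component $B_{Y,i}$ with $b_i\ne 1$. Hence $n_i = 0$ for every such $i$, which forces $\phi_P = 0$, $\sum_i \llcorner b_i\lrcorner n_i = \epsilon_P$, and therefore $\mathrm{ord}_P(B_Z) = \sum_i b_i n_i - a_P = \epsilon_P - a_P = 1$. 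The indicator term in the formula above contributes $1$, giving $\mathrm{ord}_P(F) = 0$. Thus $F \geq 0$ is effective and $f$-exceptional, and the projection formula then completes the proof.
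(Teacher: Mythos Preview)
Your argument is correct and follows essentially the same route as the paper: both show that the integer divisor $F$ (which equals the paper's $E=f^*(\llcorner B^{<1}_Y\lrcorner+\llcorner B^{>1}_Y\lrcorner)-(\llcorner B^{<1}_Z\lrcorner+\llcorner B^{>1}_Z\lrcorner)$) is effective and $f$-exceptional by passing to the auxiliary lc pair $(Y,\{B_Y\}+B_Y^{=1})$ and using that its lc places have centers on strata of $B_Y^{=1}$, hence avoid the other components of $\Supp B_Y$. The paper packages the borderline step as the implication ``$a(\nu,Y,B_Y^{=1}+\{B_Y\})=-1 \Rightarrow a(\nu,Y,B_Y)=-1$'' via \cite[Lemma~2.45]{km}, whereas you unwind the same fact explicitly in coordinates; the content is identical.
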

\begin{proof}
By $K_Z+B_Z=f^*(K_Y+B_Y)$, we 
obtain 
\begin{align*}
K_Z=&f^*(K_Y+B^{=1}_Y+\{B_Y\})\\&+f^*
(\llcorner B^{<1}_Y\lrcorner+\llcorner B^{>1}_Y\lrcorner)
-(\llcorner B^{<1}_Z\lrcorner+\llcorner B^{>1}_Z\lrcorner)
-B^{=1}_Z-\{B_Z\}.
\end{align*} 
If $a(\nu, Y, B^{=1}_Y+\{B_Y\})=-1$ for a prime divisor 
$\nu$ over $Y$, then 
we can check that $a(\nu, Y, B_Y)=-1$ by using 
\cite[Lemma 2.45]{km}. 
Since $f^*
(\llcorner B^{<1}_Y\lrcorner+\llcorner B^{>1}_Y\lrcorner)
-(\llcorner B^{<1}_Z\lrcorner+\llcorner B^{>1}_Z\lrcorner)$ is 
Cartier, we can easily see that 
$$f^*(\llcorner B^{<1}_Y\lrcorner+\llcorner B^{>1}_Y\lrcorner)
=\llcorner B^{<1}_Z\lrcorner+\llcorner B^{>1}_Z\lrcorner+E, $$ 
where $E$ is an effective $f$-exceptional divisor. 
Thus, we obtain 
$$f_*\mathcal O_Z(\ulcorner -(B^{<1}_Z)\urcorner 
-\llcorner B^{>1}_Z\lrcorner)\simeq 
\mathcal O_Y(\ulcorner -(B^{<1}_Y)\urcorner 
-\llcorner B^{>1}_Y\lrcorner).$$  
We finish the proof. 
\end{proof}
Although the following 
lemma is not indispensable for the proof of 
the main theorem, it may be useful. 
The proof is quite nontrivial. 

\begin{lem}\label{tai2}
We use the same notation and assumption 
as in {\em{Lemma \ref{taisetsu}}}.  
Let $S$ be a simple normal crossing divisor on $Y$ such 
that $S\subset \Supp B^{=1}_Y$. Let $T$ be the union of the 
irreducible 
components of $B^{=1}_Z$ that are mapped into $S$ by $f$. 
Assume that 
$\Supp f^{-1}_*B_Y\cup \Exc (f)$ is simple normal crossing on $Z$. 
Then we have 
$$f_*\mathcal O_T(\ulcorner -(B^{<1}_T)\urcorner 
-\llcorner B^{>1}_T\lrcorner)\simeq 
\mathcal O_S(\ulcorner -(B^{<1}_S)\urcorner 
-\llcorner B^{>1}_S\lrcorner),$$ 
where 
$(K_Z+B_Z)|_T=K_T+B_T$ and $(K_Y+B_Y)|_S=K_S+B_S$. 
\end{lem}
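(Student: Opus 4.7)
The natural strategy is to combine Lemma \ref{taisetsu} with the residue sequence for $T\hookrightarrow Z$. Writing $M_Z=-\llcorner B_Z\lrcorner+B_Z^{=1}$ and $M_T=-\llcorner B_T\lrcorner+B_T^{=1}$ (and likewise $M_Y$, $M_S$ on $Y$ and $S$), I first observe that $T\subset B_Z^{=1}$ forces the divisor $M_Z$ to have coefficient $0$ along every component of $T$, so that $M_Z|_T$ makes sense as a Cartier divisor on $T$. A local SNC computation using the adjunction $B_T=(B_Z-T)|_T$ then yields $M_Z|_T=M_T$, and the same argument on $Y$ gives $M_Y|_S=M_S$ together with the tautological cokernel sequence $0\to\mathcal O_Y(M_Y-S)\to\mathcal O_Y(M_Y)\to\mathcal O_S(M_S)\to 0$.

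Next I apply $f_*$ to the residue exact sequence
\[
0\to \mathcal O_Z(M_Z-T)\to \mathcal O_Z(M_Z)\to \mathcal O_T(M_T)\to 0.
\]
Lemma \ref{taisetsu} gives $f_*\mathcal O_Z(M_Z)=\mathcal O_Y(M_Y)$ immediately. To identify $f_*\mathcal O_Z(M_Z-T)$, I perturb the boundary: for $\varepsilon>0$ sufficiently small, set $B_Y'=B_Y+\varepsilon S$ and $B_Z'=B_Z+\varepsilon f^*S$, so that $K_Z+B_Z'=f^*(K_Y+B_Y')$. The SNC hypothesis is preserved because $\Supp f^*S\subset \Supp f^{-1}_*B_Y\cup \Exc(f)$, and a direct coefficient check---components of $S$ and of $T$ move from coefficient $1$ to a value in $(1,2)$ and acquire a new contribution of $-1$ to the rounded divisor, while all other coefficients shift by a small perturbation that does not cross an integer---shows $-\llcorner B_Y'\lrcorner+(B_Y')^{=1}=M_Y-S$ and $-\llcorner B_Z'\lrcorner+(B_Z')^{=1}=M_Z-T$. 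Hence Lemma \ref{taisetsu} applied to $(B_Y',B_Z')$ yields $f_*\mathcal O_Z(M_Z-T)=\mathcal O_Y(M_Y-S)$. Inserting the two identifications into the pushed-forward residue sequence produces a canonical injection $\mathcal O_S(M_S)\hookrightarrow f_*\mathcal O_T(M_T)$, and the lemma reduces to upgrading this to an equality, equivalently to the vanishing of the connecting map $f_*\mathcal O_T(M_T)\to R^1f_*\mathcal O_Z(M_Z-T)$.

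The main obstacle is this last vanishing, or equivalently the injectivity of the induced map $R^1f_*\mathcal O_Z(M_Z-T)\to R^1f_*\mathcal O_Z(M_Z)$. Using $K_Z+B_Z=f^*(K_Y+B_Y)$ one rewrites $M_Z-T=K_Z+\bigl(\{B_Z\}+B_Z^{=1}-T\bigr)-f^*(K_Y+B_Y)$, exhibiting $\mathcal O_Z(M_Z-T)$ in the standard form for injectivity theorems: $K_Z$ plus an SNC boundary with coefficients in $[0,1]$, twisted by the $f$-numerically trivial divisor $-f^*(K_Y+B_Y)$, where the increment $T$ is a sum of coefficient-$1$ components of that boundary. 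The required injectivity is then exactly the content of the Ambro--Fujino injectivity theorem for log-smooth pairs, reviewed in the appendix (Section \ref{sec4}), which I will invoke to conclude. I do not see how to bypass this input: since $-f^*(K_Y+B_Y)$ is only $f$-numerically trivial and not $f$-big, plain Kawamata--Viehweg--Nadel vanishing is inadequate, and the new cohomological package is genuinely essential.
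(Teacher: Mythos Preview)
Your overall architecture matches the paper's: push forward the residue sequence for $T\hookrightarrow Z$, identify the two $f_*$-terms, and kill the connecting map using the package in Section~\ref{sec4}. Your perturbation trick with $B_Y'=B_Y+\varepsilon S$ is a correct alternative to the paper's more direct route, which simply observes that $T=f^*S-F$ with $F$ effective $f$-exceptional, whence $M_Z-T=f^*(M_Y-S)+(E+F)$ for effective exceptional $E+F$ and so $f_*\mathcal O_Z(M_Z-T)=\mathcal O_Y(M_Y-S)$ immediately. (Note that the paper's route does \emph{not} use the extra SNC hypothesis at this stage.)

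The genuine gap is in your last paragraph. What is stated in Section~\ref{sec4} is the torsion-free theorem (Theorem~\ref{ap1}\,(1)), not an injectivity theorem, and the Koll\'ar--Ambro--Fujino injectivity you have in mind requires the increment $D$ to be controlled by the semi-ample class $H$; here $H=-f^*(K_Y+B_Y)$ is $f$-numerically trivial, so there is no room for $T$, and your sentence ``the increment $T$ is a sum of coefficient-$1$ components of that boundary'' is literally false, since the components of $T$ have coefficient $0$ in $\{B_Z\}+B_Z^{=1}-T$. What actually finishes the argument is Theorem~\ref{ap1}\,(1) applied to $L=M_Z-T$ with boundary $\{B_Z\}+B_Z^{=1}-T$: every nonzero local section of $R^1f_*\mathcal O_Z(M_Z-T)$ contains in its support the $f$-image of some stratum of $(Z,\{B_Z\}+B_Z^{=1}-T)$, while the image of the connecting map is supported in $f(T)\subset S$. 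One must therefore prove the \emph{Claim}: no stratum of $(Z,\{B_Z\}+B_Z^{=1}-T)$ is mapped into $S$. This is exactly where the hypothesis that $\Supp f^{-1}_*B_Y\cup\Exc(f)$ is SNC is genuinely used: if a stratum $C$ of $B_Z^{=1}-T$ satisfied $f(C)\subset S$, then $C\subset\Supp f^*S$, and a local SNC coordinate check forces one of the components of $B_Z^{=1}$ through $C$ to be a component of $f^*S$, hence a component of $T$, contradicting that $C$ is a stratum of $B_Z^{=1}-T$. You invoke the SNC hypothesis only in the perturbation step (where it is convenient but, as the paper's argument shows, not essential) and omit it precisely where it is indispensable.
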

\begin{proof} 
We use the same notation as in the proof of Lemma \ref{taisetsu}. 
We consider 
\begin{align*}
0&\to \mathcal O_Z(\ulcorner -(B^{<1}_Z)\urcorner-
\llcorner B^{>1}_Z\lrcorner-T)\\ &\to 
\mathcal O_Z(\ulcorner -(B^{<1}_Z)\urcorner-
\llcorner B^{>1}_Z\lrcorner)
\to \mathcal O_T(\ulcorner -(B^{<1}_T)\urcorner-
\llcorner B^{>1}_T\lrcorner)\to 0. 
\end{align*} 
Since $T=f^*S-F$, where $F$ is an effective $f$-exceptional 
divisor, we can easily see that 
$$f_*\mathcal O_Z(\ulcorner -(B^{<1}_Z)\urcorner 
-\llcorner B^{>1}_Z\lrcorner-T)\simeq 
\mathcal O_Y(\ulcorner -(B^{<1}_Y)\urcorner 
-\llcorner B^{>1}_Y\lrcorner-S).$$  
We note that 
\begin{align*} 
&(\ulcorner -(B^{<1}_Z)\urcorner 
-\llcorner B^{>1}_Z\lrcorner-T)-(K_Z+\{B_Z\}+(B^{=1}_Z-T))
\\ &=-f^*(K_Y+B_Y). 
\end{align*} 
Therefore, every local section 
of $R^1f_*\mathcal O_Z(\ulcorner -(B^{<1}_Z)\urcorner 
-\llcorner B^{>1}_Z\lrcorner-T)$ contains in its support the $f$-image 
of some stratum of $(Z, \{B_Z\}+B^{=1}_Z-T)$ by 
Theorem \ref{ap1} (1). 
\begin{claim}
No strata of $(Z, \{B_Z\}+B^{=1}_Z-T)$ are 
mapped into $S$ by $f$. 
\end{claim}
\begin{proof}[Proof of Claim]
Assume that there is a stratum $C$ of $(Z, \{B_Z\}+B^{=1}_Z-T)$ such that 
$f(C)\subset S$. Note that 
$\Supp f^*S\subset \Supp f^{-1}_*B_Y\cup \Exc (f)$ and 
$\Supp B^{=1}_Z\subset \Supp f^{-1}_*B_Y\cup \Exc (f)$. 
Since $C$ is also a stratum of $(Z, B^{=1}_Z)$ and 
$C\subset \Supp f^*S$, 
there exists an irreducible component $G$ of $B^{=1}_Z$ such that 
$C\subset G\subset \Supp f^*S$. 
Therefore, by the definition of $T$, $G$ is an 
irreducible component of $T$ because $f(G)\subset S$ and $G$ is an 
irreducible component of $B^{=1}_Z$. So, 
$C$ is not a stratum of $(Z, \{B_Z\}+B^{=1}_Z-T)$. It is 
a contradiction. 
\end{proof}
On the other hand, $f(T)\subset S$. Therefore, 
$$f_*\mathcal O_T(\ulcorner -(B^{<1}_T)\urcorner
-\llcorner B^{>1}_T\lrcorner)
\to R^1f_*\mathcal O_Z(\ulcorner -(B^{<1}_Z)\urcorner
-\llcorner B^{>1}_Z\lrcorner-T)$$ is a zero-map by the above 
claim. 
Thus, we obtain  
$$f_*\mathcal O_T(\ulcorner -(B^{<1}_T)\urcorner 
-\llcorner B^{>1}_T\lrcorner)\simeq 
\mathcal O_S(\ulcorner -(B^{<1}_S)\urcorner 
-\llcorner B^{>1}_S\lrcorner).$$ 
We finish the proof.  
\end{proof}

\begin{rem}\label{28rem} 
Let $X$ be an $n$-dimensional normal variety and $\Delta$ 
an $\mathbb R$-divisor on $X$ such that 
$K_X+\Delta$ is $\mathbb R$-Cartier. 
Let $f:Y\to X$ be a resolution 
with 
$K_Y+\Delta_Y=f^*(K_X+\Delta)$ 
such that $\Supp \Delta_Y$ is simple normal crossing. 
We put $A=\ulcorner -(\Delta^{<1}_Y)\urcorner$, $N=\llcorner 
\Delta^{>1}_Y\lrcorner$, and 
$W=\Delta^{=1}_Y$. 
Since $R^if_*\mathcal O_Y(A-N-W)=0$ for $i>0$ by the 
Kawamata--Viehweg vanishing theorem, 
we have 
$$
0\to \mathcal J(X, \Delta)\to \mathcal J_{NLC}(X, \Delta)\to f_*\mathcal O_W
(A|_W-N|_W)\to 0, 
$$ 
and 
$$
R^if_*\mathcal O_Y(A-N)\simeq R^if_*\mathcal O_W(A|_W-N|_W)
$$ 
for every $i>0$. 
In general, $R^if_*\mathcal O_Y(A-N)\ne 0$ for 
$1\leq i\leq n-1$. 

From now on, we assume that $\Delta$ is effective. We put 
$F=W-E$, where $E$ is the union of irreducible components of $W$ which 
are mapped to $\Nlc(X, \Delta)$. 
Then we have 
$$
f_*\mathcal O_Y(A-N-E)= f_*\mathcal O_Y(A-N)=\mathcal J_{NLC}
(X, \Delta). 
$$ 
Applying $f_*$ to the following short exact sequence 
$$
0\to \mathcal O_Y(A-N-W)\to \mathcal O_Y(A-N-E)\to \mathcal O_F(A|_F-N|_F
-E|_F)\to 0, 
$$
we obtain that 
$$
f_*\mathcal O_F(A|_F-N|_F-E|_F)= f_*\mathcal O_W(A|_W-N|_W). 
$$ 
In particular, $\mathcal J(X, \Delta)= \mathcal J_{NLC}(X, \Delta)$ 
if and only if $(X, \Delta)$ has no lc centers. 
\end{rem}

\subsection{Examples of Non-lc Ideal Sheaves}

Here, we explain some elementary examples. 

\begin{ex}
Let $X$ be an $n$-dimensional 
smooth variety. 
Let $P\in X$ be a closed point and $\mathfrak m=\mathfrak 
m_P$ the associated maximal 
ideal. 
Let $f:Y\to X$ be the blow-up 
at $P$. 
Then $f^{-1}\mathfrak m =\mathcal O_Y(-E)$, where 
$E$ is the exceptional divisor of $f$. 
If $c>n$, then 
\begin{align*}
\mathcal J_{NLC}(X; c\cdot \mathfrak m)=f_*\mathcal 
O_Y(((n-1)-\llcorner c\lrcorner )E)=
\mathcal J(X; c\cdot \mathfrak m)= \mathfrak 
m^{\llcorner c\lrcorner -(n-1)}. 
\end{align*}
If $c<n$, 
then \begin{align*}
\mathcal J_{NLC}(X; c\cdot \mathfrak m)= 
f_*\mathcal O_Y(((n-1)-\llcorner c\lrcorner )E)= \mathcal J(X; 
c\cdot \mathfrak m)= \mathcal O_X. 
\end{align*}
When $c=n$, we note that 
\begin{align*}
\mathcal J_{NLC}(X; c\cdot \mathfrak 
m)=f_*\mathcal O_Y\simeq \mathcal O_X
\supsetneq \mathcal J(X; c\cdot \mathfrak 
m)=f_*\mathcal O_Y(-E)=\mathfrak 
m. 
\end{align*} 
\end{ex}

\begin{ex}
Let $X$ be a smooth variety and $D$ 
a smooth divisor on $X$. 
Then $\mathcal J_{NLC}(X, D)=\mathcal O_X$. 
However, $$\mathcal J_{NLC}(X, (1+\varepsilon)D)=
\mathcal O_X(-D)$$ for every $0<\varepsilon \ll 1$. 
On the other hand, $$\mathcal J(X, D)=\mathcal J(X, (1+\varepsilon)D)
= \mathcal O_X(-D)$$ for every $0<\varepsilon \ll 1$. 
\end{ex}

We note the following lemma on the {\em{jumping numbers}}, 
whose proof is obvious by the definitions (cf.~\cite[Lemma 9.3.21, Definition 9.3.22]
{lazarsfeld}). 

\begin{lem}[Jumping numbers] 
Let $X$ be a smooth variety and $D$ an effective $\mathbb Q$-divisor 
$($resp.~$\mathbb R$-divisor$)$ on $X$. Let 
$x\in X$ be a fixed point contained in the support of $D$. 
Then there is an increasing sequence 
$$
0<\xi_0(D; x) <\xi _1(D; x)<\xi _2(D; x)<\cdots 
$$ 
of rational $($resp.~real$)$ numbers $\xi_i =\xi_i (D; x)$ characterized 
by the properties that 
$$
\mathcal J(X, c\cdot D)_x=\mathcal J(X, \xi_i\cdot D)_x \ \ \text{for}\ \  c\in 
[\xi_i, \xi _{i+1}), 
$$ 
while 
$\mathcal J(X, \xi _{i+1}\cdot D)_x\subsetneq \mathcal J(X, \xi_i\cdot D)_x$ for 
every $i$. 
The rational $($resp.~real$)$ numbers $\xi_i(D; x)$ are called the {\em{jumping 
numbers}} of $D$ at $x$. 
We can check the properties that 
$$
\mathcal J_{NLC}(X, c\cdot D)_x=\mathcal J_{NLC}(X, d\cdot D)_x \ \ \text{for} \ \ 
c, d \in 
(\xi_i, \xi _{i+1}), 
$$ 
while 
$\mathcal J_{NLC}(X, \xi _{i+1}\cdot D)_x\subsetneq \mathcal J_{NLC}
(X, \xi_i\cdot D)_x$ for 
every $i$. 
Moreover, 
$\mathcal J_{NLC}(X, c\cdot D)_x=\mathcal J(X, c\cdot D)_x$ for 
$c\in (\xi_i, \xi_{i+1})$ by {\em{Remark \ref{28rem}}}. 
\end{lem}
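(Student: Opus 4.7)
The plan is to reduce both sides to a uniform divisorial computation on a single log resolution. Since $\Supp(cD)=\Supp D$ for every $c>0$, one log resolution $f:Y\to X$ of $(X,D)$ serves for all $c$. Writing $f^{*}D=\sum_{F}a_{F}F$ and $K_{Y/X}=\sum_{F}b_{F}F$, the coefficient along $F$ of the divisor $\Delta_{Y}(c)$ appearing in Definition~\ref{21} is the affine linear function $\alpha_{F}(c):=ca_{F}-b_{F}$ of slope $a_{F}\ge 0$. Only those $F$ with $x\in f(F)$ contribute to the stalks at $x$, and by the characterisation of jumping numbers for $\mathcal{J}$ the $\xi_{j}$ are exactly those positive $c$ at which some such $\alpha_{F}(c)$ meets an integer $\ge 1$.

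For the constancy on $(\xi_{i},\xi_{i+1})$ and the ``Moreover'' identity, note that on this open interval no relevant $\alpha_{F}(c)$ is an integer; in particular $\alpha_{F}(c)\ne 1$ for every such $F$, so $\Delta_{Y}(c)^{=1}$ is disjoint from $f^{-1}(x)$. Hence $(X,cD)$ has no lc centre through $x$, and Remark~\ref{28rem} gives $\mathcal{J}_{NLC}(X,cD)_{x}=\mathcal{J}(X,cD)_{x}$ throughout. Combined with the constancy of $\mathcal{J}(X,cD)_{x}$ on $[\xi_{i},\xi_{i+1})$, this yields both the constancy of $\mathcal{J}_{NLC}$ on the open interval and the ``Moreover'' clause of the lemma.

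For the inclusion $\mathcal{J}_{NLC}(X,\xi_{i+1}D)_{x}\subseteq\mathcal{J}_{NLC}(X,\xi_{i}D)_{x}$ I would chain
$$\mathcal{J}_{NLC}(X,\xi_{i+1}D)_{x}\,\subseteq\,\mathcal{J}_{NLC}(X,cD)_{x}\,=\,\mathcal{J}(X,\xi_{i}D)_{x}\,\subseteq\,\mathcal{J}_{NLC}(X,\xi_{i}D)_{x}$$
for any $c\in(\xi_{i},\xi_{i+1})$, using monotonicity of the coefficient function $t\mapsto-\llcorner t\lrcorner+[t=1]$ in $t$, the previous paragraph, and the tautological containment $\mathcal{J}\subseteq\mathcal{J}_{NLC}$. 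For strictness the plan is a case analysis at the integer-crossings responsible for $\xi_{i}$ and $\xi_{i+1}$: if some $\alpha_{F}(\xi_{i+1})$ is an integer $\ge 2$, then the first inclusion is strict because the correction $[t=1]$ cannot absorb the floor-drop there; alternatively, if some $\alpha_{F}(\xi_{i})=1$, then $(X,\xi_{i}D)$ has an lc centre through $x$ and the last inclusion is strict by Remark~\ref{28rem}. The hard part is that neither case need occur---it can happen that every crossing at $\xi_{i+1}$ is at the integer $1$ and every crossing at $\xi_{i}$ is at an integer $\ge 2$, in which case the chain collapses to equalities. Handling this combinatorial configuration is the main obstacle, and is what the author elides with ``obvious by the definitions''; a natural way around it is to replace the $\xi_{i}$ in the second half of the statement by the intrinsic jumping numbers of $\mathcal{J}_{NLC}$ itself.
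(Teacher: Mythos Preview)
The paper offers no proof at all: the lemma is simply asserted to be ``obvious by the definitions'', with a pointer to Lazarsfeld for the classical multiplier-ideal part. So there is nothing to compare against; your proposal already goes far beyond what the paper does.

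That said, your argument contains one genuine error. You write that ``the $\xi_{j}$ are exactly those positive $c$ at which some such $\alpha_{F}(c)$ meets an integer $\ge 1$''. This identifies the jumping numbers with the \emph{candidate} jumping numbers coming from the chosen log resolution, and that is false in general: the candidates form a superset, and there are standard examples (already for a plane cusp) where the inclusion is strict. Consequently your assertion that on $(\xi_i,\xi_{i+1})$ no relevant $\alpha_F(c)$ is an integer---hence no $\alpha_F(c)=1$, hence no lc centre through $x$---is unjustified. The conclusion you want is nonetheless immediate from the sandwich
\[
\mathcal J(X,cD)\ \subseteq\ \mathcal J_{NLC}(X,cD)\ \subseteq\ \mathcal J\bigl(X,(1-\varepsilon)cD\bigr)
\]
recorded in the Introduction: for $c\in(\xi_i,\xi_{i+1})$ and $\varepsilon$ small the two outer ideals have the same stalk at $x$, so $\mathcal J_{NLC}(X,cD)_x=\mathcal J(X,cD)_x$ on the whole open interval. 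This gives both the constancy and the ``Moreover'' clause without any appeal to the divisorial combinatorics.

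Your discussion of the strict inequality $\mathcal J_{NLC}(X,\xi_{i+1}D)_x\subsetneq\mathcal J_{NLC}(X,\xi_i D)_x$ is the interesting part. The sandwich gives the chain
\[
\mathcal J_{NLC}(X,\xi_{i+1}D)_x\ \subseteq\ \mathcal J(X,\xi_i D)_x\ \subseteq\ \mathcal J_{NLC}(X,\xi_i D)_x,
\]
and you are right that your case split (``some crossing at $\xi_{i+1}$ lands at an integer $\ge 2$'' versus ``some crossing at $\xi_i$ lands at $1$'') does not exhaust the possibilities once one drops the false identification of candidate and actual jumping numbers. You have put your finger on a point the paper simply elides. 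Your suggested reformulation---stating the second half for the intrinsic jumping loci of $\mathcal J_{NLC}$ rather than for the $\xi_i$---is exactly the content that the sandwich \emph{does} yield cleanly: $\mathcal J_{NLC}(X,cD)_x$ is locally constant on $(0,\infty)\setminus\{\xi_i\}$ and can only change at the $\xi_i$. Whether every $\xi_i$ is in fact a jump for $\mathcal J_{NLC}$ is not something the paper proves.
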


\begin{ex}
Let $X=\mathbb C^2=\Spec \mathbb C[z_1, z_2]$ and 
$D=(z_1=0)+(z_2=0)+(z_1=z_2)$. 
Then we can directly check that 
$$\mathcal J_{NLC}(X, D)=\mathfrak m^2$$ and 
$$\mathcal J_{NLC}(X, (1-\varepsilon)D)=\mathcal J(X, 
(1-\varepsilon)D)=\mathfrak m$$ for 
$0<\varepsilon \ll 1$, where 
$\mathfrak m$ is the maximal ideal associated to $0\in 
\mathbb C^2$. 
On the other hand, $$\mathcal J_{NLC}(X, (1+\varepsilon)D)
=\mathcal J(X, (1+\varepsilon)D)\subsetneq 
\mathcal J_{NLC}(X, D)$$ for $0<\varepsilon \ll 1$ because 
$D\subset \Nlc(X, (1+\varepsilon)D)$. 
Note that $$\mathcal J(X, D)=\mathcal J(X, (1+\varepsilon)D)
\subsetneq \mathcal J_{NLC}(X, D)$$ for $0<\varepsilon 
\ll 1$.   
\end{ex}

\subsection{Main Theorem:~Restriction Theorem} 
The following theorem is the main theorem of this paper. 

\begin{thm}[Restriction Theorem]\label{main}
Let $X$ be a normal variety and $S+B$ an effective $\mathbb R$-divisor 
on $X$ such that $S$ is reduced and normal and that $S$ and $B$ have no 
common irreducible components. 
Assume that $K_X+S+B$ is $\mathbb R$-Cartier. 
Let $B_S$ be the different on $S$ such that 
$K_S+B_S=(K_X+S+B)|_S$. 
Then we obtain 
$$
\mathcal J_{NLC}(S, B_S)= \mathcal J_{NLC}(X, S+B)|_S. 
$$ 
In particular, $(S, B_S)$ is log canonical if and only if 
$(X, S+B)$ is log canonical around $S$. 
\end{thm}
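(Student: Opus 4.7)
The plan is to work on a good log resolution $f\colon Y \to X$ of $(X, S+B)$ and extract the desired equality from a short exact sequence, using the new cohomological package to kill a connecting map. Choose $f$ so that, writing $K_Y + \Delta_Y = f^*(K_X + S + B)$, the support of $\Delta_Y$ is simple normal crossing, the strict transform $S' = f^{-1}_*S$ is smooth, and $f|_{S'}\colon S' \to S$ is a log resolution of $(S, B_S)$ (possible since $S$ is normal). Set $A = \ulcorner -(\Delta_Y^{<1})\urcorner$ and $N = \llcorner \Delta_Y^{>1}\lrcorner$, so that $\mathcal J_{NLC}(X, S+B) = f_*\mathcal O_Y(A - N)$, and let $T$ be the union of those irreducible components of $\Delta_Y^{=1}$ whose image lies in $S$ (so $S' \subseteq T$). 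From the short exact sequence
$$0 \to \mathcal O_Y(A - N - T) \to \mathcal O_Y(A - N) \to \mathcal O_T(A|_T - N|_T) \to 0,$$
applying $f_*$ reduces the theorem to two points: (i) $f_*\mathcal O_T(A|_T - N|_T) \simeq \mathcal J_{NLC}(S, B_S)$, and (ii) the connecting map $\delta\colon f_*\mathcal O_T(A|_T - N|_T) \to R^1f_*\mathcal O_Y(A - N - T)$ vanishes.

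For (i), adjunction gives $(K_Y + \Delta_Y)|_{S'} = K_{S'} + (\Delta_Y - S')|_{S'} = (f|_{S'})^*(K_S + B_S)$, so $f|_{S'}$ directly computes $\mathcal J_{NLC}(S, B_S)$, and Lemma \ref{tai2} (applied on the smooth $Y$, if need be after first passing to a smooth birational model over $X$) absorbs the contribution of the components of $T \setminus S'$, all of which are exceptional over their images in $S$. This yields the isomorphism $f_*\mathcal O_T(A|_T - N|_T) \simeq \mathcal J_{NLC}(S, B_S)$.

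The main obstacle is (ii). Rewriting
$$A - N - T = K_Y + \{\Delta_Y\} + (\Delta_Y^{=1} - T) - f^*(K_X + S + B),$$
we see that Theorem \ref{ap1}(1) applies to $(Y, \{\Delta_Y\} + (\Delta_Y^{=1} - T))$: every nonzero local section of $R^1f_*\mathcal O_Y(A - N - T)$ has in its support the $f$-image of some stratum of this SNC pair. The image of $\delta$ is supported on $f(T) \subseteq S$, so it suffices to verify that no stratum of $(Y, \{\Delta_Y\} + \Delta_Y^{=1} - T)$ maps into $S$. This is exactly the Claim proved inside Lemma \ref{tai2}, and its argument carries over verbatim: such a stratum would lie in $\Supp f^* S \subseteq \Supp f^{-1}_*(S+B) \cup \Exc(f)$, forcing it to be contained in some irreducible component $G$ of $\Delta_Y^{=1}$ with $f(G) \subseteq S$, hence $G \in T$ by definition, contradicting that the stratum is cut out by components of $\Delta_Y^{=1} - T$. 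Combined with (i), this yields a surjection $\mathcal J_{NLC}(X, S+B) \twoheadrightarrow \mathcal J_{NLC}(S, B_S)$ factoring through $\mathcal J_{NLC}(X, S+B) \to \mathcal O_X \to \mathcal O_S$, so $\mathcal J_{NLC}(X, S+B)|_S = \mathcal J_{NLC}(S, B_S)$. The ``in particular'' statement then follows because $\mathcal J_{NLC}(X, S+B)|_S = \mathcal O_S$ locally means $1 \in \mathcal J_{NLC}(X, S+B) + \mathcal I_S \subseteq \mathcal J_{NLC}(X, S+B) + \mathfrak{m}$, forcing $\mathcal J_{NLC}(X, S+B)$ to contain a unit near $S$.
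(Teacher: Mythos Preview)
Your argument for (ii) is correct and matches Step~1 of the paper's proof. The gap is in (i): Lemma~\ref{tai2} does not give you $f_*\mathcal O_T(A|_T-N|_T)\simeq\mathcal J_{NLC}(S,B_S)$. That lemma compares a resolution $Z\to Y$ between \emph{smooth} varieties where the base carries an SNC boundary; here the base is the singular $X$, and $\Supp(S+B)$ is not SNC. Your suggested fix of ``first passing to a smooth birational model over $X$'' does not close the gap. If you factor $Y\to X'\to X$ with $X'$ a log resolution and try to apply Lemma~\ref{tai2} to $Y\to X'$, the divisor playing the role of $S$ on $X'$ must be the union of components of $\Delta_{X'}^{=1}$ mapping into $S$, and the corresponding $T$ upstairs consists of components of $\Delta_Y^{=1}$ mapping into \emph{that} set, which need not coincide with your $T$ (components mapping into $S$). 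Even when it does, pushing forward further to $S$ only returns the $X'$-level expression, not $\mathcal J_{NLC}(S,B_S)$.

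More conceptually, the identity in (i) is the entire content of the theorem: together with the short exact sequence it immediately yields the ``in particular'' clause, which is Kawakita's inversion of adjunction on log canonicity. The paper does not derive (i) from Lemma~\ref{tai2}; rather, it decomposes $T\setminus S'=T_1+T_2$ according to whether components map into $X_{NLC}$, uses Kawakita's theorem (Corollary~\ref{renketsu}) to show that subtracting $T_2$ does not change $f_*\mathcal O_{S'}(A-N)$, and then runs a second application of Theorem~\ref{ap1}(1) on $T_1$ to close the comparison. Without invoking inversion of adjunction (or reproving it), your (i) is unproved, and with it the theorem.
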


\begin{rem}
The notion of {\em{different}} 
was introduced by Shokurov in \cite[\S 3]{shokurov}. 
For the definition and the basic properties, 
see, for example, \cite[9.2.1]{ambro} and 
\cite[Section 14]{fuji-f}. 
\end{rem}

Before we start the proof of Theorem \ref{main}, let us 
see an easy example. 

\begin{ex}
Let $X=\mathbb C^2=\Spec \mathbb C[x, y]$, 
$S=(x=0)$, 
and $B=(y^2=x^3)$. We put $B_S=B|_S$. 
Then we have $K_S+B_S=(K_X+S+B)|_S$. 
By direct calculations, we obtain  
$$
\mathcal J_{NLC}(S, B_S)=\mathfrak m^2, \ \ \ \ 
\mathcal J_{NLC}(X, S+B)=\mathfrak n^2, 
$$
where $\mathfrak m$ (resp.~$\mathfrak n$) is the maximal 
ideal corresponding to $0\in S$ (resp.~$(0, 0)\in X$). 
Of course, we have 
$$
\mathcal J_{NLC}(S, B_S)=\mathcal J_{NLC}(X, S+B)|_S. 
$$ 
\end{ex}

Let us start the proof of Theorem \ref{main}. 

\begin{proof}[Proof of {\em{Theorem \ref{main}}}]
We take a resolution 
$f:Y\to X$ with the following 
properties. 

\begin{itemize}
\item[(i)]$\Exc (f)$ is a simple normal crossing 
divisor on $Y$. 
\item[(ii)] $f^{-1}X_{NLC}$ is a simple normal crossing 
divisor on $Y$, where $X_{NLC}=\Nlc(X, S+B)$. 
\item[(iii)] $f^{-1}S$ is a simple normal crossing 
divisor on $Y$. 
\item[(iv)] $f^{-1}(X_{NLC}\cap S)$ is a simple normal crossing 
divisor on $Y$. 
\item[(v)] $\Exc (f)\cup f^{-1}X_{NLC}\cup f^{-1}_*B\cup f^{-1}S$ is 
a divisor with a simple normal crossing support. 
\end{itemize}
We put $K_Y+B_Y=f^*(K_X+S+B)$. 
Then $\Supp B_Y$ is simple normal crossing 
by (i) and (v). 
Let $S_Y$ be the strict transform of $S$ on $Y$. 
Let $T$ be the union of the components of $B^{=1}_Y-S_Y$ that are 
mapped into $S$ by $f$. 
We can decompose $T=T_1+T_2$ as follows. 

\begin{itemize}
\item[(a)] Any irreducible component of $T_2$ is 
mapped into $X_{NLC}$ by $f$. 
\item[(b)] Any irreducible component of $T_1$ is not mapped 
into $X_{NLC}$ by $f$. 
\end{itemize}
By (ii) and (v), any stratum of $T_1$ is not mapped into 
$X_{NLC}$ by $f$. 

We put $A=\ulcorner -(B^{<1}_Y)\urcorner$ and $N=\llcorner B^{>1}_Y\lrcorner$. 
Then $A$ is an effective $f$-exceptional 
divisor. 
Moreover, $A|_{S_{Y}}$ is exceptional with respect to 
$f:S_Y\to S$. 
Then we have 
$$
\mathcal J_{NLC}(X, S+B)=f_*\mathcal O_Y(A-N)  
$$ 
and $$
\mathcal J_{NLC}(S, B_S)=f_*\mathcal O_{S_Y}(A-N). 
$$
Here, we used 
$$
K_{S_Y}+(B_Y-S_Y)|_{S_Y}=f^*(K_S+B_S). 
$$ 
It follows from 
$K_Y+B_Y=f^*(K_X+S+B)$ by adjunction. 

\begin{step}\label{step1}
We consider the following short exact sequence 
$$
0\to \mathcal O_Y(A-N-(S_Y+T))\to \mathcal O_Y(A-N)\to 
\mathcal O_{S_Y+T}(A-N)\to 0. 
$$ 
Applying $R^if_*$, we obtain that 
\begin{align*}
0&\to f_*\mathcal O_Y(A-N-(S_Y+T))\to f_*\mathcal O_Y(A-N)\\ 
&\to 
f_*\mathcal O_{S_Y+T}(A-N)\to R^1f_*\mathcal O_Y(A-N-(S_Y+T))\to 
\cdots. 
\end{align*}
We note that 
\begin{align*}
&A-N-(S_Y+T)-(K_Y+\{B_Y\}+(B^{=1}_Y-S_Y-T))
\\ &=-f^*(K_X+S+B)
\end{align*} 
and that any stratum of $B^{=1}_Y-S_Y-T$ is not mapped into 
$S$ by $f$ (see the conditions (iii) and (v)). 
Therefore, the support of every non-zero local section of 
$R^1f_*\mathcal O_Y(A-N-(S_Y+T))$ can not be contained in $S$ 
by Theorem \ref{ap1} (1). 
Thus,  
the connecting homomorphism 
$$
f_*\mathcal O_{S_Y+T}(A-N)\to R^1f_*\mathcal O_Y(A-N-(S_Y+T))
$$ 
is a zero-map. Thus, we obtain 
$$
0\to J\to \mathcal J_{NLC}(X, S+B)\to I\to 0, 
$$
where $I:=f_*\mathcal O_{S_Y+T}(A-N)$ and $J:=f_*\mathcal O_Y(A-N-(S_Y+T))$. 
We note that 
the ideal sheaf $J=f_*\mathcal O_Y(A-N-(S_Y+T))\subset \mathcal O_X$ 
defines a scheme structure on $S'=S\cup X_{NLC}$. 
We will check that 
$I\subset \mathcal O_S$ and 
$I= \mathcal J_{NLC}(X, S+B)|_S$ by 
$f(S_Y+T)=S$ and the following commutative diagrams: 
$$
\xymatrix{
0\ar[r] & J\ar[d]^{=}\ar[r]&\mathcal J_{NLC}(X, S+B)\ar[d]\ar[r]&I \ar[d]
\ar[r] &0\\
0\ar[r] & J
\ar[r]&\mathcal O_X\ar[r]& \mathcal O_{S'}\ar[r] & 0,
}
$$
and 
$$
\xymatrix{
0\ar[r] & J\ar[d]\ar[r]&\mathcal O_X\ar[d]^{=}\ar[r]&\mathcal O_{S'} \ar[d]^{\alpha}
\ar[r] &0\\
0\ar[r] & \mathcal O_X(-S)
\ar[r]&\mathcal O_X\ar[r]& \mathcal O_{S}\ar[r] & 0.
}
$$
It is sufficient to 
prove $\Ker \alpha \cap I=\{0\}$, 
where $\alpha:\mathcal O_{S'}\to \mathcal O_S$. 
We note that 
$I=\mathcal J_{NLC}(X, S+B)/J$ and 
$\Ker \alpha =\mathcal O_X(-S)/J$. 
It is easy to see that $$\mathcal J_{NLC}(X, S+B)\cap 
\mathcal O_X(-S)\subset J$$ since $f(S_Y+T)=S$. 
Thus, $\Ker \alpha \cap I=\{0\}$. This means that 
$I\subset \mathcal O_S$ and $I=\mathcal J_{NLC}(X, S+B)|_S$. 

Therefore, it is enough to 
prove $I= \mathcal J_{NLC}(S, B_S)$. 
\end{step}

\begin{step}\label{step2}
In this step, we will prove the
following natural inclusion
$$
f_*\mathcal O_{S_Y+T_1}(A-N-T_2)
\subset
f_*\mathcal O_{S_Y+T}(A-N)=I
$$
is an isomorphism.
We consider the short exact sequence
\begin{align*}
0&\to \mathcal O_Y(A-N-(S_Y+T))\to
\mathcal O_Y(A-N-T_2)
\\ &\to
\mathcal O_{S_Y+T_1}(A-N-T_2)\to 0
\end{align*}
Applying $R^if_*$, we obtain that
\begin{align*}
0& \to J\to f_*\mathcal O_Y(A-N-T_2)
\to f_*\mathcal O_{S_Y+T_1}(A-N-T_2)
\\ &\overset{\delta}\to R^1f_*\mathcal O_Y(A-N-(S_Y+T))
\to \cdots.
\end{align*}
The connecting homomorphism $\delta$ is zero by the completely
same reason as in Step \ref{step1}.
Therefore, we obtain the following commutative diagram.
$$
\xymatrix{
0\ar[r] & J\ar[d]^{=}\ar[r]&f_*\mathcal O_Y(A-N-T_2)\ar[d]^{\beta}\ar[r]&
f_*\mathcal O_{S_Y+T_1}(A-N-T_2)  \ar[d]
\ar[r] &0\\
0\ar[r] & J
\ar[r]&f_*\mathcal O_Y(A-N)\ar[r]& I\ar[r] & 0.
}
$$
The homomorphism $\beta$ is an isomorphism
since $f(T_2)\subset f(N)=X_{NLC}$.
Therefore, we obtain
$$f_*\mathcal O_{S_Y+T_1}(A-N-T_2)=I\subset \mathcal O_S. $$
\end{step}

\begin{step}\label{step3}
The inclusion 
$$
f_*\mathcal O_{S_Y}(A-N-T_2)\subset 
f_*\mathcal O_{S_Y}(A-N)=\mathcal J_{NLC}(S, B_S)\subset \mathcal O_S
$$ 
is obvious. 
By Kawakita's inversion of adjunction on log canonicity 
(cf.~Corollary \ref{renketsu}), 
we obtain the opposite inclusion 
$$
f_*\mathcal O_{S_Y}(A-N)\subset f_*\mathcal O_{S_Y}(A-N-T_2). 
$$ 
Therefore, we obtain 
$$
f_*\mathcal O_{S_Y}(A-N-T_2)=f_*
\mathcal O_{S_Y}(A-N)=\mathcal J_{NLC}
(S, B_S). 
$$
\end{step}

\begin{step}
We consider the following short exact sequence 
\begin{align*}
0&\to \mathcal O_{T_1}(A-N-S_Y-T_2)
\to 
\mathcal O_{S_Y+T_1}(A-N-T_2)
\\ &\to 
\mathcal O_{S_Y}(A-N-T_2)
\to 0. 
\end{align*}
We note that 
$$
f_*\mathcal O_{S_Y+T_1}(A-N-T_2)=I\subset \mathcal O_S  
$$
by Step \ref{step2} and 
$$
f_*\mathcal O_{S_Y}(A-N-T_2)=\mathcal J_{NLC}
(S, B_S) 
$$ by Step \ref{step3}. 
By taking $R^if_*$, 
we obtain 
that 
$$
0\to I\to \mathcal J_{NLC}(S, B_S)\to R^1f_*\mathcal O_{T_1}
(A-N-S_Y-T_2)\to \cdots. 
$$
Here, we used the fact that 
$$f_*\mathcal O_{T_1}(A-N-S_Y-T_2)=0. $$ 
Note that no irreducible components of $S$ are dominated by 
$T_1$. 

Since $\mathcal J_{NLC}(S, B_S)\subset \mathcal O_S$, 
we obtain $$\mathcal J_{NLC}(S, B_S)/I\subset \mathcal O_S/I.$$ 
Since 
\begin{align*}
& A-N- (S_Y+T_2)-(K_Y+T_1+\{B_Y\}+(B^{=1}_Y-S_Y-T))\\
&=-f^*(K_X+S+B), 
\end{align*}
we have 
\begin{align*}
&(A-N-(S_Y+T_2))|_{T_1}-(K_{T_1}+(\{B_Y\}+B^{=1}_Y-S_Y-T)|_{T_1})\\ 
& \sim_{\mathbb R}-f^*(K_X+S+B)|_{f(T_1)}. 
\end{align*}
Therefore, the support of every non-zero local section 
of $R^1f_*\mathcal O_{T_1}(A-N-S_Y-T_2)$ can not be 
contained in $$\Supp (\mathcal O_S/I)\subset 
\Supp (\mathcal O_{S'}/I)=\Supp (\mathcal O_X/\mathcal J_{NLC}(X, S+B))=
X_{NLC}$$ by Theorem \ref{ap1} (1). 
We note that any stratum of $$(T_1, (\{B_Y\}+B^{=1}_Y-S_Y-T)|_{T_1})$$ 
is not mapped into $X_{NLC}$ by $f$ (see the conditions (iv) and (v)). 
Thus, we obtain $I= \mathcal J_{NLC}(S, B_S)$. 
\end{step}
We finish the proof of the main theorem. 
\end{proof}

In some applications, the following 
corollaries may play important roles. 

\begin{cor}\label{c-m}
We use the notation in the proof of {\em{Theorem \ref{main}}}. 
We have the following 
equalities. 
\begin{align*}
\mathcal J_{NLC}(S, B_S)&= 
f_*\mathcal O_{S_Y}(A-N)\\&= 
f_*\mathcal O_{S_Y+T}(A-N)= 
f_*\mathcal O_{S_Y+T_1}(A-N-T_2). 
\end{align*}
\end{cor}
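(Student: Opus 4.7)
The plan is to observe that each equality in the corollary has already been established inside the proof of Theorem \ref{main}, so the argument consists of bookkeeping: assembling the intermediate identifications from Steps \ref{step1}--\ref{step3} (and the fourth step) into a single chain. Nothing new has to be proved; we only need to verify that the four sheaves are identified pairwise by results already in hand.

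Concretely, I would proceed as follows. First, $\mathcal J_{NLC}(S, B_S) = f_*\mathcal O_{S_Y}(A-N)$ is essentially built into the setup of the proof: writing $(K_X+S+B)|_S=K_S+B_S$ and applying adjunction to $K_Y+B_Y=f^*(K_X+S+B)$ gives $K_{S_Y}+(B_Y-S_Y)|_{S_Y}=f^*(K_S+B_S)$, so $(A-N)|_{S_Y}$ is precisely the divisor computing the non-lc ideal on $S$ via the induced log resolution $S_Y\to S$, which is well-defined by Proposition \ref{2626}. Next, $I=f_*\mathcal O_{S_Y+T}(A-N)$ is the very definition of $I$ introduced in Step \ref{step1}, and the identification $I=\mathcal J_{NLC}(S, B_S)$ is the final conclusion of Step 4. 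Combining these two gives the middle equality. Finally, $f_*\mathcal O_{S_Y+T_1}(A-N-T_2)=I$ is exactly the output of Step \ref{step2}, obtained by pushing forward the short exact sequence
\[
0\to \mathcal O_Y(A-N-(S_Y+T))\to \mathcal O_Y(A-N-T_2)\to \mathcal O_{S_Y+T_1}(A-N-T_2)\to 0,
\]
observing that the connecting map into $R^1f_*\mathcal O_Y(A-N-(S_Y+T))$ vanishes for the same reason as in Step \ref{step1}, and using that $f_*\mathcal O_Y(A-N-T_2)\to f_*\mathcal O_Y(A-N)$ is an isomorphism because $T_2$ is supported on divisors mapped into $f(N)=X_{NLC}$.

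There is essentially no obstacle here; the corollary is a compilation, and the only care needed is to record which step of Theorem \ref{main} supplies which identification, so that no circularity is hidden. In particular one should note that the equality $I=\mathcal J_{NLC}(S,B_S)$ used here is the nontrivial one — it rests on Kawakita's inversion of adjunction through Step \ref{step3} — whereas the identifications $f_*\mathcal O_{S_Y+T}(A-N)=I$ and $f_*\mathcal O_{S_Y+T_1}(A-N-T_2)=I$ are purely cohomological consequences of the vanishing theorem applied in Steps \ref{step1}--\ref{step2}.
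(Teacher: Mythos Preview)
Your proposal is correct and matches the paper's approach: the corollary is stated in the paper without a separate proof precisely because each equality is already established inside the proof of Theorem \ref{main}, and you have correctly traced which step supplies which identification (the setup for the first, Step \ref{step2} for the last, and the conclusion $I=\mathcal J_{NLC}(S,B_S)$ of the fourth step---resting on Step \ref{step3} and Kawakita's inversion of adjunction---for the middle one).
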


\begin{cor}\label{29} 
We use the notation in the proof of {\em{Theorem \ref{main}}}. 
We obtained the following short exact sequence{\em{:}} 
$$
0\to J\to \mathcal J_{NLC}(X, S+B)\to \mathcal J_{NLC}(S, B_S)\to 0. 
$$
Let $\pi:X\to V$ be a projective morphism onto an algebraic variety $V$ and 
$L$ a Cartier divisor on $X$ such that 
$L-(K_X+S+B)$ is $\pi$-ample. 
Then $$R^i\pi_*(J\otimes \mathcal O_X(L))=0$$ for all $i>0$. 
In particular, $$R^i\pi_*(\mathcal J_{NLC}(X, S+B)\otimes \mathcal O_X(L))\to 
R^i\pi_*(\mathcal J_{NLC}(S, B_S)\otimes \mathcal O_S(L))$$ is 
surjective for $i=0$ and is an isomorphism for every $i\geq 1$.  
As a corollary, we obtain 
$$
\pi_*(\mathcal J_{NLC}(S, B_S)\otimes \mathcal O_S(L))
\subset \im (\pi_*\mathcal O_X(L)\to \pi_*\mathcal O_S(L)). 
$$
\end{cor}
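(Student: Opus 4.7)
The plan is to derive everything from the single vanishing statement $R^i\pi_*(J\otimes \mathcal O_X(L))=0$ for $i>0$. Granted this, applying $R^i\pi_*$ to the sequence of Theorem \ref{main} tensored with $\mathcal O_X(L)$,
$$0\to J\otimes \mathcal O_X(L)\to \mathcal J_{NLC}(X,S+B)\otimes \mathcal O_X(L)\to \mathcal J_{NLC}(S,B_S)\otimes \mathcal O_S(L)\to 0,$$
immediately yields the surjectivity for $i=0$ and the isomorphism for $i\geq 1$. The final inclusion is then read off from the commutative square
$$\xymatrix{\pi_*(\mathcal J_{NLC}(X,S+B)\otimes \mathcal O_X(L))\ar[r]\ar[d] & \pi_*(\mathcal J_{NLC}(S,B_S)\otimes \mathcal O_S(L))\ar[d]\\ \pi_*\mathcal O_X(L)\ar[r] & \pi_*\mathcal O_S(L),}$$
whose top row is surjective by the preceding statement and whose vertical arrows are the inclusions induced by $\mathcal J_{NLC}\subset \mathcal O$.

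For the vanishing itself, I would go back to the description of $J$ extracted in Step \ref{step1} of the proof of Theorem \ref{main}, namely $J=f_*\mathcal O_Y(A-N-(S_Y+T))$, together with the identity
$$\bigl(A-N-(S_Y+T)\bigr)-\bigl(K_Y+\{B_Y\}+(B^{=1}_Y-S_Y-T)\bigr)=-f^*(K_X+S+B).$$
Set $H:=L-(K_X+S+B)$, which is $\pi$-ample by hypothesis, and set $D:=A-N-(S_Y+T)+f^*L$. Writing $\Delta:=\{B_Y\}+(B^{=1}_Y-S_Y-T)$, the pair $(Y,\Delta)$ is simple normal crossing with $\mathbb R$-boundary $\Delta$, and we have $D-(K_Y+\Delta)=f^*H$. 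Since $H$ is $\pi$-ample, $f^*H$ is $(\pi\circ f)$-semi-ample and $(\pi\circ f)$-big, while it is $f$-numerically trivial. The vanishing theorem for SNC pairs in the new cohomological package (cf.~Theorem \ref{ap1} and \cite[Chapter 2]{fujino-book}) therefore applies both relatively under $f$ and absolutely under $\pi\circ f$, yielding
$$R^jf_*\mathcal O_Y(D)=0\quad(j>0)\qquad\text{and}\qquad R^i(\pi\circ f)_*\mathcal O_Y(D)=0\quad(i>0).$$
The projection formula identifies $f_*\mathcal O_Y(D)$ with $J\otimes \mathcal O_X(L)$, and the Leray spectral sequence then collapses to $R^i\pi_*(J\otimes \mathcal O_X(L))=R^i(\pi\circ f)_*\mathcal O_Y(D)=0$ for $i>0$, as required.

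The main obstacle I foresee is verifying the positivity hypotheses of the SNC vanishing theorem on every stratum of $(Y,\Delta)$, in particular checking that $f^*H$ behaves well on strata that are contracted by $f$ and that restrictions of the $\pi$-ample divisor $H$ to images of strata retain enough positivity. This reduces to standard properties of $\pi$-ample divisors together with the compatibility conditions on the resolution $f$ already built into the proof of Theorem \ref{main}; once these are confirmed the corollary follows formally from the long exact sequence and diagram chase above.
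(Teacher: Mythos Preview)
Your reduction of the corollary to the single vanishing $R^i\pi_*(J\otimes\mathcal O_X(L))=0$ is correct, and so is the identity
\[
D-(K_Y+\Delta)=f^*\bigl(L-(K_X+S+B)\bigr)=f^*H
\]
with $D=f^*L+A-N-(S_Y+T)$ and $\Delta=\{B_Y\}+(B^{=1}_Y-S_Y-T)$. The gap is in how you invoke the vanishing package. Theorem~\ref{ap1} does \emph{not} give $R^jf_*\mathcal O_Y(D)=0$ for $j>0$: since $f^*H$ is $f$-numerically trivial and $\Delta$ has reduced components (namely the part $B^{=1}_Y-S_Y-T$), only the torsion-freeness statement~(1) applies over $f$, and indeed these higher direct images are typically nonzero (compare Remark~\ref{28rem}). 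Consequently your Leray argument does not yield $R^i(\pi\circ f)_*\mathcal O_Y(D)=0$ either, because the spectral sequence then only gives $R^i(\pi\circ f)_*\mathcal O_Y(D)\cong \pi_*R^if_*\mathcal O_Y(D)$, which need not vanish.

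The paper's proof bypasses this detour entirely. Theorem~\ref{ap1}(2) applies directly in the factored form $Y\xrightarrow{f}X\xrightarrow{\pi}V$: since $D-(K_Y+\Delta)=f^*H$ with $H$ $\pi$-ample, the conclusion is that $R^qf_*\mathcal O_Y(D)$ is $\pi_*$-acyclic for every $q\ge 0$. Taking $q=0$ gives
\[
R^i\pi_*\bigl(J\otimes\mathcal O_X(L)\bigr)=R^i\pi_*\bigl(f_*\mathcal O_Y(D)\bigr)=0\quad(i>0)
\]
immediately, with no need to control $R^jf_*$ for $j>0$ or to pass through $\pi\circ f$. Your worry in the final paragraph about positivity on strata is therefore a red herring: Theorem~\ref{ap1}(2) requires only that $H$ be $\pi$-ample, not any stratum-by-stratum bigness condition.
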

\begin{proof}
Note that we have 
\begin{align*}
&f^*L+A-N-(S_Y+T)-(K_Y+B^{=1}_Y+\{B_Y\}-(S_Y+T))
\\&=f^*(L-(K_X+S+B)). 
\end{align*}
Therefore, $R^i\pi_*(f_*\mathcal O_Y(f^*L+A-N-(S_Y+T)))=0$ for $i>0$ 
by Theorem \ref{ap1} (2). 
Thus, $R^i\pi_*(J\otimes \mathcal O_X(L))=0$ for all $i>0$ because 
$J=f_*\mathcal O_Y(A-N-(S_Y+T))$. 
\end{proof}

\begin{rem}
In Corollary \ref{29}, the ideal $J$ is independent 
of the resolution $f:Y\to X$ by Lemma \ref{tai2}. 
\end{rem}

\begin{rem}
In Corollary \ref{29}, 
we can weaken the assumption 
that $L-(K_X+S+B)$ is $\pi$-ample as follows. 
The $\mathbb R$-Cartier $\mathbb R$-divisor 
$D=L-(K_X+S+B)$ is $\pi$-nef and 
$\pi$-big and $D|_C$ is $\pi$-big 
for every lc center $C$ that is 
not contained in $S$. 
See the proof of Theorem \ref{vani2} below. 
\end{rem}

\subsection{Direct Consequences of Restriction Theorem} 
Let us collect some direct consequences of the restriction theorem. 

\begin{prop}
Let $X$ be a smooth variety, let $D$ be an effective 
$\mathbb R$-divisor on $X$, 
and let $H\subset X$ be a smooth 
irreducible divisor that does not appear in the support of $D$. 
Then 
$$
\mathcal J_{NLC}(H, D|_H)= \mathcal J_{NLC}(X, H+D)|_H
\subseteq \mathcal J_{NLC}(X, D)|_H. 
$$
\end{prop}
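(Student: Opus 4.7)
The plan is to split the statement into two pieces: the equality on the left and the inclusion on the right. For the equality, I would simply invoke the main Restriction Theorem (Theorem \ref{main}). Indeed, since $X$ is smooth $H$ is normal (being smooth), $H$ and $D$ share no components by hypothesis, and $K_X+H+D$ is automatically $\mathbb{R}$-Cartier. Because $H$ is Cartier in the smooth ambient variety, standard adjunction gives that the different on $H$ equals the ordinary restriction $D|_H$, so $K_H+D|_H=(K_X+H+D)|_H$. Plugging into Theorem \ref{main} with $S=H$ and $B=D$ yields at once $\mathcal{J}_{NLC}(H,D|_H)=\mathcal{J}_{NLC}(X,H+D)|_H$.

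For the inclusion $\mathcal{J}_{NLC}(X,H+D)|_H\subseteq \mathcal{J}_{NLC}(X,D)|_H$ it suffices to establish the global monotonicity
\[
\mathcal{J}_{NLC}(X,H+D)\subseteq \mathcal{J}_{NLC}(X,D),
\]
since restricting to $H$ preserves inclusions. I would pick a common log resolution $f\colon Y\to X$ on which $\operatorname{Exc}(f)\cup f^{-1}_*D\cup f^{-1}_*H$ has simple normal crossing support, and write $K_Y+D_Y=f^*(K_X+D)$ and $K_Y+(H+D)_Y=f^*(K_X+H+D)$. Then $(H+D)_Y-D_Y=f^*H$ is an effective divisor. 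By Lemma \ref{taisetsu} (and Proposition \ref{2626}) both ideal sheaves can be computed on this common resolution.

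The remaining task is an entirely combinatorial check on coefficients. Fix a prime divisor $E$ on $Y$ and let $c,b$ be its coefficients in $D_Y$ and $(H+D)_Y$ respectively, so $b=c+m$ with $m\geq 0$. Writing the defining divisor of the non-lc ideal as $-\llcorner\,\cdot\,\lrcorner+(\,\cdot\,)^{=1}$, I need to verify
\[
-\llcorner b\lrcorner + \mathbf{1}_{[b=1]} \;\leq\; -\llcorner c\lrcorner+\mathbf{1}_{[c=1]}.
\]
This is immediate when $\llcorner b\lrcorner>\llcorner c\lrcorner$ (the gain of at most $1$ from the $=1$ indicator is dominated by the drop in the floor), when $\llcorner b\lrcorner=\llcorner c\lrcorner$ and $c\ne 1$ (equality on the nose, since then $b\ne 1$ as well if $m>0$ forces $b>c$ inside the same integer interval, unless $b=1$, in which case one checks the two sides are equal), and in the remaining case $c=1$ with $b>1$, where the left side is strictly smaller. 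Consequently the divisor for $H+D$ lies below that for $D$ on every prime component, which gives the inclusion at the level of $\mathcal{O}_Y$-modules, and pushing forward by $f$ produces the desired inclusion of ideal sheaves.

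There is no genuine obstacle here: once Theorem \ref{main} is granted, the only substantive point is the monotonicity lemma, and the case analysis above is routine. The mild subtlety is just to notice that the $=1$ indicator can, in principle, push a coefficient upward by $1$, but this is always compensated by the floor when $b>c$ crosses an integer, so the inequality holds throughout.
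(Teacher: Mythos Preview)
Your proposal is correct and matches the paper's intended argument: the paper's own proof reads, in its entirety, ``It is obvious,'' and what you have written is precisely the unpacking of that---the equality is Theorem~\ref{main} applied verbatim (with the different reducing to ordinary restriction since $X$ and $H$ are smooth), and the inclusion follows from the monotonicity $\mathcal J_{NLC}(X,H+D)\subseteq\mathcal J_{NLC}(X,D)$ checked componentwise on a common log resolution. Your case analysis for the inequality $-\llcorner b\lrcorner+\mathbf 1_{[b=1]}\le -\llcorner c\lrcorner+\mathbf 1_{[c=1]}$ is sound, though the wording in the middle case is a little tangled and you silently omit the trivial case $m=0$; both are cosmetic.
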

\begin{proof}
It is obvious. 
\end{proof}

\begin{cor}
Let $|V|$ be a free linear system, and let $H\in |V|$ be a general 
divisor. 
Then we have 
$$
\mathcal J_{NLC}(H, D|_H)=\mathcal J_{NLC}(X, D)|_H 
$$ 
because $\mathcal J_{NLC}(X, D)=\mathcal J_{NLC}(X, H+D)$. 
\end{cor}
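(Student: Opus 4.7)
The plan is to invoke the preceding Proposition together with the identity $\mathcal J_{NLC}(X,D)=\mathcal J_{NLC}(X,H+D)$ flagged in the statement. Granting that identity, and noting that a general member $H$ of a base-point free linear system is a smooth irreducible divisor by Bertini (for $\dim X\geq 2$) which is not contained in $\Supp D$ (since $D$ has only finitely many components while $|V|$ moves), the preceding Proposition applied to the pair $(X,H+D)$ yields
$$
\mathcal J_{NLC}(H,D|_H)=\mathcal J_{NLC}(X,H+D)\big|_H=\mathcal J_{NLC}(X,D)\big|_H,
$$
which is the desired equality.

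To establish the identity $\mathcal J_{NLC}(X,D)=\mathcal J_{NLC}(X,H+D)$, I would first fix a log resolution $f:Y\to X$ of $(X,D)$ and write $K_Y+D_Y=f^*(K_X+D)$ with $\Supp D_Y$ simple normal crossing. Applying Bertini to the base-point free linear system $|f^*V|$ on $Y$, a general $H\in|V|$ has the property that $H_Y:=f^*H$ is smooth and irreducible, does not coincide with any of the finitely many components of $D_Y$, and meets $\Supp D_Y\cup \Exc(f)$ transversally. Hence $f$ is simultaneously a log resolution of $(X,H+D)$, and the corresponding formula reads
$$
K_Y+D_Y+H_Y=f^*(K_X+H+D).
$$

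Since $H_Y$ enters with coefficient exactly one and shares no component with $D_Y$, we have the identities $\llcorner D_Y+H_Y\lrcorner=\llcorner D_Y\lrcorner+H_Y$ and $(D_Y+H_Y)^{=1}=D_Y^{=1}+H_Y$, and consequently
$$
-\llcorner D_Y+H_Y\lrcorner+(D_Y+H_Y)^{=1}=-\llcorner D_Y\lrcorner+D_Y^{=1}.
$$
Pushing forward by $f$ and appealing to Definition \ref{21} gives $\mathcal J_{NLC}(X,H+D)=\mathcal J_{NLC}(X,D)$, which combined with the Proposition completes the proof. The only genuinely nontrivial input is the Bertini positioning of $f^*H$ with respect to the already chosen log resolution of $(X,D)$; everything else is formal divisor bookkeeping.
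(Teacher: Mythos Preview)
Your proposal is correct and follows exactly the approach the paper intends; the paper's own proof consists only of the words ``It is obvious,'' and you have faithfully unpacked what is being left implicit --- namely, that by Bertini one may choose a single log resolution $f:Y\to X$ serving both $(X,D)$ and $(X,H+D)$, and that the extra summand $H_Y$ (entering with coefficient exactly $1$) cancels between $-\llcorner D_Y+H_Y\lrcorner$ and $(D_Y+H_Y)^{=1}$, whence $\mathcal J_{NLC}(X,H+D)=\mathcal J_{NLC}(X,D)$ and the preceding Proposition finishes the argument.
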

\begin{proof}
It is obvious. 
\end{proof}

\begin{cor}
Let $D$ be an effective $\mathbb R$-divisor on the smooth variety $X$, and 
let $Y\subset X$ be a smooth subvariety that is not contained 
in the support of $D$. 
Then 
$$
\mathcal J_{NLC}(Y, D_Y)\subseteq \mathcal J_{NLC}(X, D)|_Y, 
$$
where $D_Y=D|_Y$. 
\end{cor}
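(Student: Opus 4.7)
The plan is to reduce to the divisor case (the Proposition immediately preceding the statement) by induction on the codimension $c=\operatorname{codim}_X Y$, exploiting the fact that the assertion is an inclusion of ideal sheaves on $Y$ and hence is local on $Y$. For $c=1$ the claim is exactly that Proposition, so we may assume $c\ge 2$. It then suffices to verify the desired inclusion in a Zariski neighborhood of an arbitrary point $p\in Y$.

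In such a neighborhood I would choose an affine open $U\subset X$ containing $p$ together with algebraic coordinates $x_1,\dots,x_n$ on $U$ in which $Y\cap U=\{x_1=\dots=x_c=0\}$, and set $H:=\{x_1=0\}\subset U$. Then $H$ is a smooth irreducible divisor on $U$ containing $Y\cap U$, and, after shrinking $U$ if necessary so that $Y\cap U\not\subseteq \Supp D$, also $H\not\subseteq \Supp D|_U$; for otherwise $Y\cap U\subseteq H\subseteq \Supp D$, contradicting the hypothesis on $Y$.

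With $H$ so chosen, the argument chains two inclusions. First, the preceding Proposition applied to the divisor $H\subset U$ with $\mathbb R$-divisor $D|_U$ gives
$\mathcal J_{NLC}(H,D|_H) \subseteq \mathcal J_{NLC}(U,D|_U)|_H$.
Second, the inductive hypothesis applied to the smooth subvariety $Y\cap U\subseteq H$ of codimension $c-1$, with $\mathbb R$-divisor $D|_H$ (which does not contain $Y\cap U$ in its support), gives
$\mathcal J_{NLC}(Y\cap U, D|_{Y\cap U}) \subseteq \mathcal J_{NLC}(H,D|_H)|_{Y\cap U}$.
Restricting the first inclusion further to $Y\cap U$ and composing with the second yields the claimed containment over $U$; since $p\in Y$ was arbitrary, it promotes to a global inclusion on $Y$.

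I do not anticipate a serious obstacle. The only compatibility worth noting is that $\mathcal J_{NLC}$ commutes with restriction to Zariski open subsets, which follows directly from Definition \ref{21} and the independence-of-resolution statement in Proposition \ref{2626}: any log resolution of $(X,D)$ restricts to a log resolution of $(U,D|_U)$, and the formula $f_*\mathcal O_Y(\ulcorner -(\Delta_Y^{<1})\urcorner - \llcorner \Delta_Y^{>1}\lrcorner)$ is manifestly compatible with such a restriction. Everything else is bookkeeping of the inductive step.
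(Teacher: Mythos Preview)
Your argument is correct and follows essentially the same route as the paper, which simply refers to the proof of \cite[Corollary~9.5.6]{lazarsfeld}; that proof is precisely this induction on codimension, locally slicing by a smooth hypersurface containing $Y$ and chaining the divisorial restriction with the inductive hypothesis. Your handling of the local issues (choice of $H$ not contained in $\Supp D$, compatibility of $\mathcal J_{NLC}$ with passage to Zariski opens) is accurate.
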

\begin{proof}
It is obvious. See, for example, the proof of \cite[Corollary 9.5.6]
{lazarsfeld}. 
\end{proof}

\begin{cor}
Let $f:Y\to X$ be a morphism of smooth 
irreducible varieties, and let $D$ be an effective $\mathbb R$-divisor 
on $X$. 
Assume that the support of $D$ does not contain $f(Y)$. Then 
one has an inclusion 
$$
\mathcal J_{NLC}(Y, f^*D)\subseteq f^{-1}\mathcal J_{NLC}(X, D) 
$$ 
of ideal sheaves on $Y$. 
\end{cor}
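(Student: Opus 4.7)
The plan is to factor $f$ through its graph, a standard device for inclusion statements of this type. Write $f=\pi\circ\Gamma$ where $\Gamma\colon Y\hookrightarrow Y\times X$, $y\mapsto (y,f(y))$, is the graph embedding and $\pi\colon Y\times X\to X$ is the second projection. The product $Y\times X$ is smooth, $\Gamma(Y)$ is a smooth closed subvariety of $Y\times X$, and the hypothesis $f(Y)\not\subset\Supp D$ translates into $\Gamma(Y)\not\subset\Supp(\pi^*D)=Y\times \Supp D$.

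The first (and essential) step is to establish the \emph{equality}
$$
\mathcal J_{NLC}(Y\times X,\pi^*D)=\pi^{-1}\mathcal J_{NLC}(X,D)\cdot\mathcal O_{Y\times X}
$$
for the flat projection $\pi$. Starting from a log resolution $g\colon X'\to X$ of $(X,D)$, I would form the base change $\tilde g=\mathrm{id}_Y\times g\colon Y\times X'\to Y\times X$ and let $\tilde\pi\colon Y\times X'\to X'$ denote the other projection. Since $\tilde\pi$ is smooth, $\tilde\pi^*D_{X'}$ has simple normal crossing support and the operators $(\cdot)^{<1}$, $(\cdot)^{>1}$, as well as round-up and round-down, all commute with $\tilde\pi^*$ on SNC divisors; combined with the canonical bundle formula $K_{Y\times X'}=p_Y^*K_Y+\tilde\pi^*K_{X'}$, this shows that $\tilde g$ is a log resolution of $(Y\times X,\pi^*D)$ computing $\mathcal J_{NLC}$ in the sense of Definition \ref{21}. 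Flat base change then identifies the pushforward along $\tilde g$ of the relevant divisorial sheaf with the pullback under $\pi$ of its pushforward along $g$, giving the asserted equality.

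The second step is immediate. Since $\Gamma(Y)$ is a smooth subvariety of $Y\times X$ not contained in $\Supp(\pi^*D)$, the preceding corollary (the smooth-subvariety case) yields
$$
\mathcal J_{NLC}(Y,\Gamma^*\pi^*D)\subseteq \mathcal J_{NLC}(Y\times X,\pi^*D)\big|_Y.
$$
Using $\Gamma^*\pi^*D=f^*D$ and observing that the restriction to $\Gamma(Y)\cong Y$ of the pulled-back ideal $\pi^{-1}\mathcal J_{NLC}(X,D)\cdot\mathcal O_{Y\times X}$ is exactly $f^{-1}\mathcal J_{NLC}(X,D)$, Step 1 converts this inclusion into the one asserted in the statement.

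The main obstacle is Step 1, namely the compatibility of $\mathcal J_{NLC}$ with smooth (flat) base change. Each of the ingredients — commutation of round-up and round-down with smooth pullback of SNC divisors, the product canonical bundle formula, and flat base change for $g_*$ of a divisorial sheaf — is standard, but they must be combined carefully to identify $\tilde g$ as a log resolution adapted to Definition \ref{21}. Once this is in place, Step 2 is a direct application of the already-proved smooth-subvariety corollary.
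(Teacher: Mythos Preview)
Your proposal is correct and follows essentially the same route the paper has in mind: the paper simply refers to \cite[Example 9.5.8]{lazarsfeld}, which is precisely the graph-factorization argument you outline (smooth/flat base change for the projection, followed by restriction to the smooth subvariety $\Gamma(Y)$). The only additional care needed, which you note, is checking that the smooth-pullback step carries over verbatim from multiplier ideals to $\mathcal J_{NLC}$, and your sketch of that verification is accurate.
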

\begin{proof}
See, for example, \cite[Example 9.5.8]{lazarsfeld}. 
\end{proof}

\begin{prop}[Divisors of small multiplicity]
Let $D$ be an effective 
$\mathbb R$-divisor on a smooth variety $X$. 
Suppose that $x\in X$ is a point at which 
$\mult _x D\leq 1$. 
Then the ideal $\mathcal J_{NLC}(X, D)$ is trivial at 
$x$. 
\end{prop}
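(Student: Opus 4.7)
The plan is to reduce to dimension one by induction on $n=\dim X$, using the Restriction Theorem (Theorem \ref{main}) as the engine that strips off one dimension at a time. The base case $n=1$ is immediate: on a smooth curve, $\mult_x D$ equals the coefficient of $x$ in $D$, so the hypothesis says this coefficient is at most one, which is exactly the log canonical condition, hence $\mathcal J_{NLC}(X,D)_x=\mathcal O_{X,x}$.

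For the inductive step $n\ge 2$, after possibly shrinking $X$ around $x$ I would choose a smooth hypersurface $H\subset X$ through $x$ not contained in $\Supp D$ --- for instance, a general linear hyperplane through $x$ in a local realization of a neighborhood of $x$ as an open subset of $\mathbb A^n$. For such a generic choice $H$ is smooth at $x$ and transverse to the finitely many components of $D$, so $\mult_x(D|_H)=\mult_x D\le 1$; by the induction hypothesis, $\mathcal J_{NLC}(H,D|_H)_x=\mathcal O_{H,x}$. Applying Theorem \ref{main} with $S=H$ and $B=D$ (valid because $H$ is a smooth normal Cartier divisor in the smooth $X$ sharing no component with $D$, and the different $B_H$ is simply $D|_H$ by ordinary adjunction) gives
\[
\mathcal J_{NLC}(X,H+D)|_H \;=\; \mathcal J_{NLC}(H,D|_H)\;=\;\mathcal O_{H,x}
\]
in a neighborhood of $x$ on $H$.

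To lift this to a statement on $X$ itself, let $h\in\mathfrak m_x$ be a local equation for $H$. The identity above reads $\mathcal J_{NLC}(X,H+D)_x+(h)=\mathcal O_{X,x}$; applying Nakayama's lemma to the cyclic, hence finitely generated, $\mathcal O_{X,x}$-module $\mathcal O_{X,x}/\mathcal J_{NLC}(X,H+D)_x$ (with $h\in\mathfrak m_x$) forces $\mathcal J_{NLC}(X,H+D)_x=\mathcal O_{X,x}$, so $(X,H+D)$ is lc at $x$. Since $H$ is effective, for every divisorial valuation $v$ centered at $x$ we have $a(v,X,D)=a(v,X,H+D)+v(H)\ge -1$, and therefore $(X,D)$ is itself lc at $x$, giving the desired triviality of $\mathcal J_{NLC}(X,D)$ at $x$. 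The one genuinely delicate point is the Bertini-type choice of $H$ (smooth through $x$, missing the components of $D$, and preserving multiplicity upon restriction), which is routine in the smooth setting; the rest of the argument is a clean pairing of the Restriction Theorem with Nakayama's lemma.
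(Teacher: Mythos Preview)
Your argument is correct and follows the same inductive strategy the paper has in mind (its proof just points to \cite[Proposition 9.5.13]{lazarsfeld}, which is precisely this cut-by-a-general-hyperplane-and-use-restriction induction, adapted here to the non-lc setting). One small streamlining: rather than routing through $\mathcal J_{NLC}(X,H+D)$ and then comparing discrepancies, you can invoke the inclusion $\mathcal J_{NLC}(H,D|_H)\subseteq \mathcal J_{NLC}(X,D)|_H$ from the proposition immediately preceding this one and apply Nakayama directly to $\mathcal O_{X,x}/\mathcal J_{NLC}(X,D)_x$.
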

\begin{proof}
It is obvious. See, for example, \cite[Proposition 9.5.13]{lazarsfeld}. 
\end{proof}

\begin{thm}[Generic Restriction]\label{gene}
Let $X$ and $T$ be smooth irreducible varieties, 
and $p:X\to T$ a smooth surjective morphism. 
Consider an effective $\mathbb R$-divisor $D$ on $X$ whose 
support does not contain any of the fibers 
$X_t=p^{-1}(t)$, so that 
for each $t\in T$ the restriction $D_t=D|_{X_t}$ is defined. 
Then there is a non-empty Zariski open set 
$U\subset T$ such that 
$$
\mathcal J_{NLC}(X_t, D_t)=\mathcal J_{NLC}(X, D)_t
$$ 
for every $t\in U$, where 
$\mathcal J_{NLC}(X, D)_t=\mathcal J_{NLC}(X, D)\cdot \mathcal O_{X_t}$ 
denotes the restriction of the indicated 
non-lc ideal to the fiber $X_t$. More generally, 
if $t\in U$ then 
$$\mathcal J_{NLC}(X_t, c\cdot D_t)=\mathcal J_{NLC}(X, 
c\cdot 
D)_t$$ 
for every $c>0$. 
\end{thm}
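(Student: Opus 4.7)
The plan is to take a common log resolution $f\colon Y\to X$ of $(X,D)$, use generic smoothness to make $f$ restrict simultaneously to every fiber of $p$, and then kill a single connecting homomorphism via an associated-primes argument based on Theorem \ref{ap1}. First I pick a log resolution $f\colon Y\to X$ with $K_Y+D_Y=f^*(K_X+D)$ and $\Supp(D_Y)$ simple normal crossing. For every $c>0$ set $A_c=\ulcorner -(cD_Y)^{<1}\urcorner$ and $N_c=\llcorner (cD_Y)^{>1}\lrcorner$, so that $\mathcal J_{NLC}(X,c\cdot D)=f_*\mathcal O_Y(A_c-N_c)$. Applying generic smoothness to $q=p\circ f\colon Y\to T$ and to every stratum of the SNC pair $(Y,D_Y)$, I shrink $T$ to a non-empty open $U$ on which $Y_t:=q^{-1}(t)$ is smooth, $D_{Y_t}:=D_Y|_{Y_t}$ is SNC with the same coefficients, and $f_t:=f|_{Y_t}\colon Y_t\to X_t$ is proper birational. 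Adjunction gives $K_{Y_t}+D_{Y_t}=f_t^*(K_{X_t}+D_t)$, so $f_t$ is a log resolution of $(X_t,c\cdot D_t)$ and $(f_t)_*\mathcal O_{Y_t}((A_c-N_c)|_{Y_t})=\mathcal J_{NLC}(X_t,c\cdot D_t)$.

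Next I reduce to $\dim T=1$ by induction on $\dim T$, restricting along a general smooth hyperplane of $T$ at each step. For $\dim T=1$, $X_t$ is a smooth Cartier divisor on $X$ and $Y_t=f^*X_t$ is a smooth Cartier divisor on $Y$. The short exact sequence
\begin{equation*}
0\to \mathcal O_Y(A_c-N_c-Y_t)\to \mathcal O_Y(A_c-N_c)\to \mathcal O_{Y_t}((A_c-N_c)|_{Y_t})\to 0,
\end{equation*}
combined with the projection formula $R^if_*\mathcal O_Y(A_c-N_c-Y_t)\simeq R^if_*\mathcal O_Y(A_c-N_c)\otimes\mathcal O_X(-X_t)$, produces the four-term exact sequence
\begin{equation*}
0\to \mathcal J_{NLC}(X,c\cdot D)\otimes\mathcal O_X(-X_t)\to \mathcal J_{NLC}(X,c\cdot D)\to (f_t)_*\mathcal O_{Y_t}((A_c-N_c)|_{Y_t})\xrightarrow{\delta} R^1f_*\mathcal O_Y(A_c-N_c)\otimes\mathcal O_X(-X_t).
\end{equation*}
Because $\mathcal J_{NLC}(X,c\cdot D)\subset\mathcal O_X$ is torsion-free and $X_t$ is Cartier, the leftmost map is an inclusion whose cokernel is precisely $\mathcal J_{NLC}(X,c\cdot D)|_{X_t}$. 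The whole statement thus reduces to showing $\delta=0$.

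The vanishing of $\delta$ is the main obstacle. I would invoke Remark \ref{28rem} to identify $R^1f_*\mathcal O_Y(A_c-N_c)\simeq R^1f_*\mathcal O_{W_c}((A_c-N_c)|_{W_c})$ with $W_c=(cD_Y)^{=1}$, and then apply Theorem \ref{ap1}\,(1): the associated subvarieties of this sheaf on $X$ are $f$-images of strata of the relevant SNC pair, hence lie in the finite collection $\{Z_1,\dots,Z_N\}$ of $f$-images of strata of $(Y,D_Y)$ — a list depending only on the resolution, not on $c$. I shrink $U$ further so that no $Z_i$ is contained in $X_t$ for any $t\in U$: any non-dominant $Z_i$ misses $X_t$ as soon as $t\notin p(Z_i)$, while any dominant $Z_i$ intersects $X_t$ in strictly smaller dimension since $\dim T\geq 1$. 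Tensoring with $\mathcal O_X(-X_t)$ preserves associated primes, so the target of $\delta$ has no associated prime contained in $X_t$, whereas its source is supported on $X_t$ and hence locally killed by a power of $\mathcal I_{X_t}$. A nonzero morphism from the source to the target would have image supported on $X_t$ with associated primes in $X_t$ yet contained in a sheaf admitting none, a contradiction. Thus $\delta=0$, and since $\{Z_1,\dots,Z_N\}$ does not depend on $c>0$, the same $U$ works uniformly for every $c>0$, giving $\mathcal J_{NLC}(X_t,c\cdot D_t)=\mathcal J_{NLC}(X,c\cdot D)_t$ on $U$.
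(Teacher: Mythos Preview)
Your argument is essentially correct, but there is one notational slip you should fix: for the pair $(X,cD)$ you must use the divisor $\Delta_{c,Y}$ determined by $K_Y+\Delta_{c,Y}=f^*(K_X+cD)$, i.e.\ $\Delta_{c,Y}=c\,f^*D-K_{Y/X}$, not $cD_Y=c(f^*D-K_{Y/X})$. Once $A_c,N_c$ are built from $\Delta_{c,Y}$, everything you wrote goes through; the components of $\Supp\Delta_{c,Y}$ still lie in the fixed SNC locus $\Exc(f)\cup f^{-1}_*D$, so your finite list $\{Z_1,\dots,Z_N\}$ remains independent of $c$. One further small clarification: Theorem~\ref{ap1}\,(1) literally says that the \emph{support} of any nonzero local section contains some $f$-image of a stratum, and this already forces $\delta=0$ (a nonzero section in $\im\delta$ would have support inside $X_t$ yet containing some $Z_i\not\subset X_t$); the detour through associated primes is unnecessary.

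Your route, however, is genuinely different from the paper's. The paper deduces Theorem~\ref{gene} from the main Restriction Theorem~\ref{main}: after passing to Lazarsfeld's generic-smoothness open set, it adds the pullback $X_1=p^*H$ of a general hyperplane $H\subset T$ through $t$ to the boundary (so $\mathcal J_{NLC}(X,cD)=\mathcal J_{NLC}(X,X_1+cD)$), applies Theorem~\ref{main} with $S=X_1$ to get $\mathcal J_{NLC}(X,cD)|_{X_1}=\mathcal J_{NLC}(X_1,cD|_{X_1})$, and iterates $\dim T$ times. This brings in the full strength of Theorem~\ref{main}, and hence Kawakita's inversion of adjunction (Theorem~\ref{kawa-thm}). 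You bypass Theorem~\ref{main} entirely, proving the codimension-one restriction directly via the short exact sequence and Theorem~\ref{ap1}\,(1), much as the paper does in Lemma~\ref{tai2}. The paper's version showcases generic restriction as a corollary of its main theorem; yours is more self-contained and shows that this particular statement does not actually require inversion of adjunction.
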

\begin{proof}
We use the same notation as in the proof of \cite[Theorem 9.5.35]{lazarsfeld}. 
Let $U$ be the non-empty Zariski open set of $T$ that was obtained 
in the proof of \cite[Theorem 9.5.35]{lazarsfeld}. 
By shrinking $T$, we can assume that 
$T=U$. 
We take a general hypersurface $H$ of $T$ 
passing through $t\in U$. 
Then $\mathcal J_{NLC}(X, c\cdot D)=\mathcal J_{NLC}(X, 
X_1+c\cdot D)$, 
where $X_1=p^*H$. 
By Theorem \ref{main}, 
\begin{align*}
\mathcal J_{NLC}(X, c\cdot D)|_{X_1}&=\mathcal 
J_{NLC}(X, X_1+c\cdot D)|_{X_1}\\ 
&=\mathcal J_{NLC}(X_1, c\cdot D|_{X_1}). 
\end{align*}
By applying this argument $\dim T$ times, 
we obtain that 
$\mathcal J_{NLC}(X_t, c\cdot D_t)=\mathcal J_{NLC}(X, 
c\cdot 
D)_t$. 
\end{proof}

The following corollary is a direct consequence 
of Theorem \ref{gene}. 

\begin{cor}[Semicontinuity]\label{215} 
Let $p:X\to T$ be a smooth morphism 
as in {\em{Theorem \ref{gene}}}, and let $D$ be an effective 
$\mathbb R$-divisor 
on $X$ satisfying the hypotheses of 
that statement. 
Suppose moreover given a section $y:T\to X$ of $p$, and 
write $y_t=y(t)\in X$. 
If $y_t\in \Nlc(X_t, D_t)$ for 
$t\ne 0\in T$, 
then $y_0\in \Nlc(X_0, D_0)$. 
\end{cor}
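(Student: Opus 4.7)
The plan is to combine Theorem \ref{gene} with the closedness of the non-lc locus, and then to transfer the conclusion to the special fiber via the one-sided subvariety restriction corollary stated earlier in this section.

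First, I would invoke Theorem \ref{gene} to produce a non-empty Zariski open $U\subseteq T$ with $\mathcal{J}_{NLC}(X_t, D_t)=\mathcal{J}_{NLC}(X, D)_t$ for every $t\in U$. Writing $Z:=\Nlc(X, D)$ for the closed subset of $X$ cut out by $\mathcal{J}_{NLC}(X, D)$, passing to co-supports on both sides turns the previous identity into $\Nlc(X_t, D_t)=Z\cap X_t$ for every $t\in U$. Since $y\colon T\to X$ is a morphism and $Z$ is closed, $y^{-1}(Z)$ is closed in $T$, and the hypothesis $y_t\in \Nlc(X_t, D_t)$ for $t\neq 0$ gives $U\setminus\{0\}\subseteq y^{-1}(Z)$. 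Because $T$ is irreducible, $U\setminus\{0\}$ is dense in $T$, so closedness forces $y^{-1}(Z)=T$; in particular $y_0\in Z\cap X_0$.

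The remaining step will be to upgrade $y_0\in Z\cap X_0$ to $y_0\in \Nlc(X_0, D_0)$. Here one cannot simply re-apply Theorem \ref{gene}, because the special point $0$ need not belong to $U$; this is really the only subtle point in the argument. Instead, $X_0$ is smooth (as a fiber of the smooth morphism $p$) and is not contained in $\Supp D$ by the assumption on $D$, so the subvariety restriction corollary above applies and yields $\mathcal{J}_{NLC}(X_0, D_0)\subseteq \mathcal{J}_{NLC}(X, D)|_{X_0}$. Passing to co-supports reverses this inclusion to give $Z\cap X_0\subseteq \Nlc(X_0, D_0)$, and combining this with $y_0\in Z\cap X_0$ produces $y_0\in \Nlc(X_0, D_0)$, completing the proof.
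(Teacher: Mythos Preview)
Your argument is correct and follows the same outline as the proof of \cite[Corollary 9.5.39]{lazarsfeld} that the paper defers to: apply the generic restriction theorem to identify $\Nlc(X_t,D_t)$ with $Z\cap X_t$ for $t$ in a dense open, use closedness of $y^{-1}(Z)$ to get $y_0\in Z\cap X_0$, and then use the one-sided subvariety restriction inclusion to pass from $Z\cap X_0$ to $\Nlc(X_0,D_0)$. There is nothing to add.
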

\begin{proof}
See the proof of \cite[Corollary 9.5.39]{lazarsfeld}. 
\end{proof}

We close this subsection with the subadditivity theorem 
for non-lc ideal sheaves (cf.~\cite{del}). 

\begin{thm}[Subadditivity] 
Let $X$ be a smooth 
variety. 
\begin{itemize}
\item[(1)] Suppose that $D_1$ and $D_2$ are any two effective 
$\mathbb R$-divisor on $X$. 
Then 
$$
\mathcal J_{NLC}(X, D_1+D_2)\subseteq \mathcal J_{NLC}(X, D_1)\cdot 
\mathcal J_{NLC}(X, D_2). 
$$
\item[(2)] If $\mathfrak a, \mathfrak b\subseteq \mathcal O_X$ are ideal 
sheaves, 
then 
$$
\mathcal J_{NLC}(X; \mathfrak a^c\cdot \mathfrak b^d)
\subseteq \mathcal J_{NLC}(X; \mathfrak a^c)\cdot 
\mathcal J_{NLC}(X; \mathfrak b^d) 
$$ 
for any $c, d>0$. 
In particular, 
$$
\mathcal J_{NLC}(X; \mathfrak a\cdot \mathfrak b)
\subseteq \mathcal J_{NLC}(X; \mathfrak a)\cdot 
\mathcal J_{NLC}(X; \mathfrak b).  
$$ 
\end{itemize}
\end{thm}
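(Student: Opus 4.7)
The plan is to adapt the classical diagonal trick from the multiplier-ideal proof of subadditivity (cf.~\cite[Section 9.5.B]{lazarsfeld}) to the non-lc setting. Two ingredients will carry the argument: a product formula for $\mathcal J_{NLC}$ on $X\times X$, and the pullback corollary for morphisms (established earlier in the paper) applied to the diagonal embedding $\delta:X\hookrightarrow X\times X$.

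First I would establish the product formula: for effective $\mathbb R$-divisors $D_1,D_2$ on $X$,
$$
\mathcal J_{NLC}(X\times X,\ p_1^*D_1+p_2^*D_2)=p_1^{-1}\mathcal J_{NLC}(X,D_1)\cdot p_2^{-1}\mathcal J_{NLC}(X,D_2).
$$
Pick log resolutions $f_i:Y_i\to X$ with $(D_i)_{Y_i}$ snc, and set $\pi=f_1\times f_2:Y_1\times Y_2\to X\times X$ with projections $q_i:Y_1\times Y_2\to Y_i$. Then $\pi^*(p_1^*D_1+p_2^*D_2)=q_1^*(D_1)_{Y_1}+q_2^*(D_2)_{Y_2}$, whose two summands meet transversely, so the total support is snc. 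Because the decomposition into the parts with coefficient $<1$, $=1$, and $>1$, as well as the round-up and round-down operations, all commute with the disjoint pullbacks $q_i^*$, the defining divisor $A-N$ on $Y_1\times Y_2$ splits as $q_1^*(A_1-N_1)+q_2^*(A_2-N_2)$. Pushing forward via $\pi$ and invoking the Künneth formula for line bundles on a product then yields the claimed equality.

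Next I would apply the pullback corollary to $\delta$ and $D=p_1^*D_1+p_2^*D_2$. The hypothesis $\delta(X)\not\subseteq\Supp(D)$ holds because $\Supp(D)$ has pure codimension $1$, with irreducible components all of the form $p_j^{-1}(\text{prime divisor on }X)$, none of which can contain the irreducible subvariety $\delta(X)$. Consequently
$$
\mathcal J_{NLC}(X,D_1+D_2)=\mathcal J_{NLC}(X,\delta^*D)\subseteq\delta^{-1}\mathcal J_{NLC}(X\times X,D),
$$
and pulling the product formula back along $\delta$ identifies the right-hand side with $\mathcal J_{NLC}(X,D_1)\cdot\mathcal J_{NLC}(X,D_2)$. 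This proves (1).

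Part (2) follows by the same recipe with $\tilde{\mathfrak a}=p_1^{-1}\mathfrak a\cdot\mathcal O_{X\times X}$ and $\tilde{\mathfrak b}=p_2^{-1}\mathfrak b\cdot\mathcal O_{X\times X}$: the analogous Künneth computation on a product of principalizations yields a product formula for $\mathcal J_{NLC}((X\times X,0);\tilde{\mathfrak a}^c\cdot\tilde{\mathfrak b}^d)$, and restriction along $\delta$ (reducible to the divisor case after passing to a common log resolution on which $\mathfrak a$ and $\mathfrak b$ become principal) produces the inclusion. The main obstacle will be verifying the product formula cleanly --- making sure the coefficient splits, the roundings, and the Künneth-type pushforward all cooperate to yield an equality rather than a mere inclusion; once this is in hand, the subadditivity of both parts follows formally from the already-established pullback corollary.
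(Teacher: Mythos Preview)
Your proposal is correct and follows precisely the route the paper intends: the paper's proof simply says that the Demailly--Ein--Lazarsfeld diagonal argument from \cite[Theorem 9.5.20]{lazarsfeld} goes through verbatim for non-lc ideals, and you have spelled out exactly that argument (product formula on $X\times X$ via K\"unneth on a product of log resolutions, then restriction along the diagonal using the pullback corollary already established in the paper). There is nothing to add.
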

\begin{proof}
The proof of the subadditivity theorem for multiplier ideal sheaves 
works for non-lc ideal sheaves. See, for example, the proof 
of \cite[Theorem 
9.5.20]{lazarsfeld}. 
We leave the details as an exercise for the reader. 
\end{proof}

\section{Miscellaneous Results}\label{sec3}
In this section, we collect some basic results of non-lc ideal 
sheaves. 

\subsection{Vanishing and Global Generation Theorems} 

Here, we state vanishing and global generation theorems explicitly. 
We can easily check them as applications of Theorem \ref{ap1} below. 

\begin{thm}[Vanishing Theorem]\label{vani} 
Let $X$ be a smooth projective 
variety, let $D$ be any $\mathbb R$-divisor on $X$, 
and let $L$ be any integral divisor such that 
$L-D$ is ample. 
Then $$H^i(X, \mathcal O_X(K_X+L)\otimes 
\mathcal J_{NLC}(X, D))=0$$ for 
$i>0$. 
\end{thm}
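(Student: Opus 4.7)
The plan is to reduce the statement to an application of the cohomological package of Theorem \ref{ap1} on a log resolution of $(X, D)$.

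First I would take a log resolution $f\colon Y\to X$ with $K_Y+D_Y=f^*(K_X+D)$ and $\Supp D_Y$ simple normal crossing, so that by definition $\mathcal J_{NLC}(X, D)=f_*\mathcal O_Y(A-N)$ where $A=\ulcorner -(D_Y^{<1})\urcorner$, $N=\llcorner D_Y^{>1}\lrcorner$, and equivalently $A-N=-\llcorner D_Y\lrcorner+D_Y^{=1}$. Set $M:=f^*(K_X+L)+A-N$ on $Y$, so that the projection formula gives
$$
f_*\mathcal O_Y(M)=\mathcal O_X(K_X+L)\otimes \mathcal J_{NLC}(X, D).
$$

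The key divisor-theoretic computation, using $K_Y+D_Y=f^*(K_X+D)$, is
\begin{align*}
M-K_Y-\{D_Y\}-D_Y^{=1}&=f^*(K_X+L)-\llcorner D_Y\lrcorner+D_Y^{=1}-K_Y-\{D_Y\}-D_Y^{=1}\\
&=f^*(K_X+L)-K_Y-D_Y\\
&=f^*(L-D),
\end{align*}
which is the pullback of an ample divisor and hence nef and big on $Y$. Since the pair $(Y, \{D_Y\}+D_Y^{=1})$ is log smooth with boundary $\mathbb R$-divisor whose round-down is the reduced snc divisor $D_Y^{=1}$, and $M$ is an integral Cartier divisor, the hypotheses of Theorem \ref{ap1} are satisfied for $M$ and the composition $Y\to X\to \Spec \mathbb C$.

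Applying Theorem \ref{ap1} then yields $H^i(X, f_*\mathcal O_Y(M))=0$ for $i>0$, which by the projection formula identification above is exactly the desired vanishing $H^i(X, \mathcal O_X(K_X+L)\otimes \mathcal J_{NLC}(X, D))=0$. The only real content is the divisor bookkeeping that produces $f^*(L-D)$ as the relevant discrepancy; once this is in place, the conclusion is immediate from the cohomological package. The main potential obstacle is verifying the log-bigness condition built into Theorem \ref{ap1} on the strata of $D_Y^{=1}$ (which may include $f$-exceptional components), but this is precisely the setting the package is formulated to handle when $M-(K_Y+B_Y)$ is the pullback of an ample divisor.
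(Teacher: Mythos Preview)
Your proof is correct and follows essentially the same route as the paper: take a log resolution, verify the identity $f^*(K_X+L)+A-N-(K_Y+D_Y^{=1}+\{D_Y\})=f^*(L-D)$, and invoke Theorem~\ref{ap1}~(2) with $\pi:X\to\Spec\mathbb C$ to obtain the vanishing of $H^i(X,f_*\mathcal O_Y(M))$. Your closing worry about a ``log-bigness condition built into Theorem~\ref{ap1}'' is misplaced---Theorem~\ref{ap1}~(2) only asks that $H\sim_{\mathbb R}f^*H'$ with $H'$ $\pi$-ample, which is exactly what $f^*(L-D)$ gives you; the log-bigness variant appears only in the separate Theorem~\ref{vani2}.
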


\begin{proof}
Let $f:Y\to X$ be a resolution with 
$K_Y+B_Y=f^*(K_X+D)$ such that $\Supp B_Y$ is 
simple normal crossing. 
Then 
$$\ulcorner -(B^{<1}_Y)\urcorner -\llcorner B^{>1}_Y\lrcorner 
+f^*(K_X+L) -(K_Y+B^{=1}_Y+\{B_Y\})=f^*(L-D). 
$$ 
Therefore, $H^i(X, R^jf_*\mathcal O_Y
(\ulcorner -(B^{<1}_Y)\urcorner -\llcorner B^{>1}_Y\lrcorner 
+f^*(K_X+L)))=0$ for every $i>0$ and $j\geq 0$ by 
Theorem \ref{ap1} (2). 
In particular, 
$$H^i(X, f_*\mathcal O_Y
(\ulcorner -(B^{<1}_Y)\urcorner -\llcorner B^{>1}_Y\lrcorner 
+f^*(K_X+L)))=0$$ for $i>0$. 
This is the desired vanishing theorem 
because $\mathcal J_{NLC}(X, D)=f_*\mathcal O_Y
(\ulcorner -(B^{<1}_Y)\urcorner -\llcorner B^{>1}_Y\lrcorner)$. 
\end{proof}

We can weaken the assumption in Theorem \ref{vani}. 
However, Theorem \ref{vani} is sufficient for our purpose in this 
paper. So, the reader can skip the next difficult theorem. 

\begin{thm}\label{vani2} 
Let $X$ be a normal variety and $\Delta$ 
an effective $\mathbb R$-divisor 
such that $K_X+\Delta$ is $\mathbb R$-Cartier. 
Let $\pi:X\to V$ be a proper 
morphism onto an algebraic variety $V$ and 
$L$ a Cartier divisor on $X$. 
Assume that $L-(K_X+\Delta)$ is $\pi$-nef and $\pi$-log 
big with respect to $(X, \Delta)$, 
that is, $L-(K_X+\Delta)$ is $\pi$-nef and $\pi$-big and 
$(L-(K_X+\Delta))|_C$ is $\pi$-big for every lc center $C$ of the 
pair $(X, \Delta)$. 
Then we have 
$$
R^i\pi_*(\mathcal J_{NLC}(X, \Delta)\otimes \mathcal O_X(L))=0
$$ 
for all $i>0$. 
\end{thm}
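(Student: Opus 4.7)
The plan is to adapt the argument of Theorem \ref{vani}, using the stronger vanishing package of Theorem \ref{ap1}(2) and a judicious modification that removes the components coming from the non-lc locus. Choose a log resolution $f:Y\to X$ so that $\Supp B_Y\cup \Exc(f)\cup f^{-1}(\Nlc(X,\Delta))$ is simple normal crossing, where $K_Y+B_Y=f^*(K_X+\Delta)$. Set $A=\ulcorner -(B_Y^{<1})\urcorner$, $N=\llcorner B_Y^{>1}\lrcorner$, and $W=B_Y^{=1}$. As in Remark \ref{28rem}, decompose $W=E+F$ where $E$ is the sum of the components of $W$ whose $f$-image lies in $\Nlc(X,\Delta)$; then Remark \ref{28rem} gives $f_*\mathcal O_Y(A-N-E)=\mathcal J_{NLC}(X,\Delta)$.

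Put $M=A-N-E+f^*L$, so that $f_*\mathcal O_Y(M)=\mathcal J_{NLC}(X,\Delta)\otimes \mathcal O_X(L)$ by the projection formula. A direct computation, formally identical to the one in the proof of Theorem \ref{vani}, yields
\[
M-(K_Y+F+\{B_Y\})=f^*\bigl(L-(K_X+\Delta)\bigr).
\]
The divisor $F+\{B_Y\}$ is a boundary with simple normal crossing support, so the remaining task is to verify that $f^*(L-(K_X+\Delta))$ is $\pi\circ f$-nef and $\pi\circ f$-log big with respect to $(Y,F+\{B_Y\})$; Theorem \ref{ap1}(2) then yields $R^i\pi_*\bigl(f_*\mathcal O_Y(M)\bigr)=0$ for $i>0$, which is exactly the desired statement.

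Nefness is immediate from the pullback, and bigness on the stratum $Y$ itself follows from $\pi$-bigness of $L-(K_X+\Delta)$ together with birationality of $f$. The main obstacle is the log bigness on the remaining strata. Since $\{B_Y\}$ has fractional coefficients, these are precisely the irreducible components of intersections of components of $F$. For such a stratum $C$, one has $f^*(L-(K_X+\Delta))|_C=(f|_C)^*\bigl((L-(K_X+\Delta))|_{f(C)}\bigr)$, and the surjection $f|_C:C\twoheadrightarrow f(C)$ transfers $\pi$-bigness downstairs to $(\pi\circ f)$-bigness upstairs; so it suffices that $f(C)$ be an lc center of $(X,\Delta)$, i.e.\ that $f(C)\not\subseteq \Nlc(X,\Delta)$. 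The delicate point is that although each component of $F$ avoids $\Nlc(X,\Delta)$ by construction, an intersection of several such components could conceivably lie inside $f^{-1}(\Nlc(X,\Delta))$. The hard part of the proof is therefore to choose the log resolution $f$ finely enough—blowing up further along any offending strata—so that every stratum of $F$ dominates an lc center of $(X,\Delta)$. Once such a resolution is fixed, the hypotheses of Theorem \ref{ap1}(2) are all in place and the vanishing follows.
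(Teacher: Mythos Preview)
Your argument follows the paper's proof essentially line for line: take a log resolution, split $\Delta_Y^{=1}=E+F$ according to whether components map into $\Nlc(X,\Delta)$, blow up further so that no stratum of $F$ lands in $\Nlc(X,\Delta)$, observe $\mathcal J_{NLC}(X,\Delta)=f_*\mathcal O_Y(A-N-E)$, and write down the identity $M-(K_Y+F+\{B_Y\})=f^*(L-(K_X+\Delta))$ to feed into the vanishing package.

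Two points deserve correction. First, Theorem~\ref{ap1}(2) as stated in this paper requires $H'$ to be $\pi$-\emph{ample}; the nef and log big hypothesis you are actually using is not covered by it. The paper itself does not appeal to Theorem~\ref{ap1}(2) here but to the stronger result \cite[Theorem~2.47]{fujino-book}, and you should do the same. Second, your justification of log bigness upstairs is not quite right: the assertion that the surjection $f|_C:C\twoheadrightarrow f(C)$ ``transfers $\pi$-bigness downstairs to $(\pi\circ f)$-bigness upstairs'' fails whenever $f|_C$ has positive-dimensional general fiber (e.g.\ when $C$ is an exceptional divisor contracted to a point), since the pullback of a big divisor along such a map is never big. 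Fortunately this repair is cosmetic: the input the external vanishing theorem actually needs is bigness of $L-(K_X+\Delta)$ on the \emph{images} $f(C)$, and once the further blow-ups guarantee that each such $f(C)$ is an lc center of $(X,\Delta)$, this is exactly the hypothesis of the theorem.
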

\begin{proof}
Let $f:Y\to X$ be a resolution with 
$K_Y+\Delta_Y=f^*(K_X+\Delta)$ such that 
$\Supp \Delta_Y$ is simple normal crossing. 
We put $F=\Delta^{=1}_Y-E$, where 
$E$ is the union of irreducible components of $\Delta^{=1}_Y$ which are 
mapped to $X_{NLC}=\Nlc (X, \Delta)$. 
If we need, we take more blow-ups and 
can assume that no strata of $F$ are mapped 
to $X_{NLC}$. 
In this case, we have 
$$\mathcal J_{NLC}(X, \Delta)= 
f_*\mathcal O_Y(\ulcorner -(\Delta^{<1}_Y)\urcorner -\llcorner \Delta^{>1}_Y
\lrcorner-E). 
$$ 

Since 
\begin{align*}
&\ulcorner -(\Delta^{<1}_Y)\urcorner -\llcorner \Delta^{>1}_Y\lrcorner -E
+f^*L-(K_Y+F+\{\Delta_Y\})\\&=f^*(L-(K_X+\Delta)), 
\end{align*} 
we have that 
$$R^i\pi_*R^jf_*\mathcal O_Y
(\ulcorner -(\Delta^{<1}_Y)\urcorner -\llcorner \Delta^{>1}_Y\lrcorner-E 
+f^*L)=0$$ for every $i>0$ and $j\geq 0$ (see, for example, 
\cite[Theorem 2.47]{fujino-book}).
So, we obtain 
$$R^i\pi_*(\mathcal J_{NLC}(X, \Delta)\otimes \mathcal O_X(L))=0$$ for $i>0$. 
\end{proof}

\begin{thm}[Global Generation]\label{gg}  
Let $X$ be a smooth projective 
variety of dimension $n$. We fix a globally generated ample 
divisor $B$ on $X$. 
Let $D$ be an effective $\mathbb R$-divisor 
and $L$ an integral divisor on $X$ such that 
$L-D$ is ample $($or, more generally, nef and 
log big with respect to $(X, D)$$)$. 
Then $\mathcal O_X(K_X+L+mB)\otimes 
\mathcal J_{NLC}(X, D)$ is globally generated as 
soon as $m\geq n$. 
\end{thm}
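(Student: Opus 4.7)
The strategy is the standard Castelnuovo--Mumford regularity argument, with Nadel vanishing replaced by the vanishing theorem for non-lc ideal sheaves proved above (Theorem \ref{vani}, or its refinement Theorem \ref{vani2} in the nef-and-log-big case). Recall Mumford's theorem: a coherent sheaf $\mathcal F$ on $X$ is globally generated provided it is $0$-regular with respect to the globally generated ample divisor $B$, i.e.\ $H^i(X, \mathcal F \otimes \mathcal O_X(-iB))=0$ for every $i>0$.

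First, I would set $\mathcal F = \mathcal O_X(K_X+L+mB) \otimes \mathcal J_{NLC}(X, D)$ and try to verify $0$-regularity. For each $i$ with $1\leq i\leq n$, we have
$$
\mathcal F \otimes \mathcal O_X(-iB) \simeq \mathcal O_X(K_X+L+(m-i)B)\otimes \mathcal J_{NLC}(X, D),
$$
while for $i>n$ cohomology vanishes for dimensional reasons. Thus it suffices to check that the integral divisor $L':=L+(m-i)B$ satisfies the hypotheses of Theorem \ref{vani} (resp.\ Theorem \ref{vani2}); that is, $L'-D = (L-D)+(m-i)B$ is ample (resp.\ nef and log big with respect to $(X,D)$).

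Since $m\geq n\geq i$, the divisor $(m-i)B$ is nef (in fact globally generated ample, or zero), and $L-D$ is ample by hypothesis. The sum of an ample and a nef divisor is ample, so $L'-D$ is ample, and Theorem \ref{vani} gives
$$
H^i\bigl(X, \mathcal F \otimes \mathcal O_X(-iB)\bigr)=0 \qquad \text{for every } i>0.
$$
(In the "nef and log big" variant, one notes instead that adding the nef and big divisor $(m-i)B$ to a divisor that is nef and big on $X$ and big on each lc center keeps both properties, so Theorem \ref{vani2} applies.) By Mumford's regularity theorem, $\mathcal F$ is globally generated, which is the conclusion.

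The only step requiring any thought is the numerical bookkeeping in the preceding paragraph; the real content is already packaged in the vanishing theorem, so I do not anticipate a genuine obstacle.
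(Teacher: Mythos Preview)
Your proposal is correct and is precisely the argument the paper has in mind: the paper's own proof simply reads ``It is obvious by Theorem \ref{vani} (or, Theorem \ref{vani2}) and Mumford's $m$-regularity,'' and you have spelled out this standard Castelnuovo--Mumford computation. The only minor imprecision is calling $(m-i)B$ ``nef and big'' in the parenthetical variant (it may be zero when $m=i$), but the argument is unaffected since adding a nef divisor already preserves nef-and-log-bigness.
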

\begin{proof}
It is obvious by Theorem \ref{vani} (or, Theorem \ref{vani2}) and 
Mumford's $m$-regularity. 
\end{proof}

\subsection{Asymptotic non-lc ideal sheaves}

Let $X$ be a smooth variety. 
Let $\mathfrak a_{\bullet}=\{\mathfrak a_m\}$ 
be a graded system of ideals on $X$. 
In other words, $\mathfrak a_{\bullet}$ consists of a collection of ideal 
sheaves $\mathfrak a_k\subseteq \mathcal O_X$ satisfying 
$\mathfrak a_0=\mathcal O_X$ and $\mathfrak a_m \cdot 
\mathfrak a_l \subseteq \mathfrak a_{m+l}$ for 
all $m, l \geq 1$. 

\begin{defn}[Non-lc ideal associated to a graded system 
of ideals] 
The {\em{asymptotic non-lc ideal sheaf}} 
of $\mathfrak a_{\bullet}$ with {\em{coefficient}} 
or {\em{exponent}} $c$, written 
either by 
$$
\mathcal J_{NLC}(X; c\cdot \mathfrak a_{\bullet}) \ \ 
\text{or} \ \ \mathcal J_{NLC}(X; \mathfrak a^c_{\bullet}) 
$$ 
is defined to be the unique 
maximal member among the 
family of ideals 
$\{\mathcal J_{NLC}(X; \frac{c}{p}\cdot \mathfrak a_p)\}$ for 
$p\geq 1$. Thus $\mathcal J_{NLC}(X; c\cdot \mathfrak 
a_{\bullet})=\mathcal J_{NLC}(X; \frac {c}{p}\cdot \mathfrak a_p)$ 
for all sufficiently large and divisible 
integer $p\gg 0$. 
\end{defn}
\begin{ex}
Let $X$ be a smooth projective variety and $L$ 
an integral divisor on $X$ of non-negative Iitaka 
dimension. 
We consider the base ideal $\mathfrak b_k =\mathfrak b (|kL|)$ of the 
complete linear system $|kL|$ for every $k\geq 0$. 
Let $\Delta$ be an effective $\mathbb R$-divisor on $X$ 
such that $K_X+\Delta$ is $\mathbb R$-Cartier. 
Then $\mathfrak b_{\bullet}$ is a graded system of 
ideals on $X$. We put $$
\mathcal J_{NLC}((X, \Delta), |\!|L|\!|):=
\mathcal J_{NLC}((X, \Delta); \mathfrak b_{\bullet}). 
$$ 
We note that $\mathcal J_{NLC}((X, \Delta); \mathfrak b_{\bullet})$ is the unique 
maximal member among the family of ideals $\{\mathcal J_{NLC}((X, \Delta); 
\frac{1}{p} \mathfrak b_p)\}$ for $p\geq 1$. 
\end{ex}

Almost all the basic properties 
of asymptotic multiplier ideal sheaves 
in \cite[11.1 and 11.2.A]{lazarsfeld} 
can be proved for asymptotic non-lc ideal sheaves 
by the same arguments. 
Therefore, we do not repeat them 
here. 
We leave them as exercises for the reader. 
We state only one theorem in this subsection. 

\begin{thm}
Let $X$ be a smooth projective variety, $\Delta$ an effective 
Cartier divisor on $X$, and $L$ an integral divisor 
on $X$ of non-negative 
Iitaka dimension. 
If $A$ is an ample divisor on $X$, 
then $$H^i(X, \mathcal O_X(K_X+\Delta+mL+A)\otimes 
\mathcal J_{NLC}((X, \Delta), |\!|mL|\!|))=0$$ 
for $i>0$. Furthermore, we assume that $B$ is 
a globally generated 
ample divisor 
on $X$. Then for every $m\geq 1$, 
$$\mathcal O_X(K_X+\Delta+lB+A+mL)\otimes 
\mathcal J_{NLC}((X, \Delta), |\!|mL|\!|)$$ is globally generated 
as soon as $l\geq \dim X$. 
\end{thm}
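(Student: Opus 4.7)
The plan is to reduce the asymptotic statement to the non-asymptotic vanishing theorem, Theorem \ref{vani}, and then to invoke Mumford's regularity just as in Theorem \ref{gg}.

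By the very definition of the asymptotic non-lc ideal, I can pick an integer $p\gg 0$, sufficiently divisible, so that
$$\mathcal J_{NLC}((X,\Delta),|\!|mL|\!|)=\mathcal J_{NLC}\bigl((X,\Delta);\tfrac{1}{p}\cdot\mathfrak b_{mp}\bigr),$$
where $\mathfrak b_{mp}=\mathfrak b(|mpL|)$ is the base ideal. I would then choose $p$ general members $D_1,\dots,D_p\in|mpL|$ and form the effective $\mathbb Q$-divisor
$$D=\frac{1}{p}(D_1+\cdots+D_p)\sim_{\mathbb Q}mL.$$
A Bertini-type argument carried out on a common log resolution of $\mathfrak b_{mp}$ and $\Delta$, parallel to the multiplier-ideal argument in \cite{lazarsfeld}, identifies
$$\mathcal J_{NLC}\bigl((X,\Delta);\tfrac{1}{p}\cdot\mathfrak b_{mp}\bigr)=\mathcal J_{NLC}(X,\Delta+D).$$
Verifying this identity---namely, that for generic $D_i$ the $<1$ and $>1$ pieces of $\Delta_Y+\frac{1}{p}F$ and of $\Delta_Y+D_Y$ agree on the resolution, so that the two formulas in Definitions \ref{21} and \ref{2.5} (the one for ideals) produce the same pushforward---is the main technical obstacle. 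Everything that follows is formal.

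With this identification, I would apply Theorem \ref{vani} to the effective $\mathbb R$-divisor $\Delta+D$ and the integral divisor $L':=\Delta+mL+A$. The key arithmetic,
$$L'-(\Delta+D)=mL+A-D\sim_{\mathbb Q}A,$$
shows that $L'-(\Delta+D)$ is ample, so Theorem \ref{vani} yields
$$H^i\bigl(X,\mathcal O_X(K_X+\Delta+mL+A)\otimes\mathcal J_{NLC}(X,\Delta+D)\bigr)=0\quad(i>0),$$
which is the asserted vanishing.

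For the global generation, I would run the same vanishing with $A$ replaced by $A+(n-i)B$, which remains ample for $0\le i\le n$ because $B$ is ample; the case $i>n$ is trivial since $\dim X=n$. This produces
$$H^i\bigl(X,\mathcal O_X(K_X+\Delta+mL+A+(n-i)B)\otimes\mathcal J_{NLC}((X,\Delta),|\!|mL|\!|)\bigr)=0$$
for $i\ge 1$, which says precisely that $\mathcal O_X(K_X+\Delta+mL+A)\otimes\mathcal J_{NLC}((X,\Delta),|\!|mL|\!|)$ is Castelnuovo--Mumford $n$-regular with respect to $B$. By Mumford's theorem, twisting further with $\mathcal O_X(lB)$ for any $l\ge n=\dim X$ produces a globally generated sheaf, which is the second assertion; this final step is identical to the proof of Theorem \ref{gg}.
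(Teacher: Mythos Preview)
Your strategy is exactly the one the paper uses: realize the asymptotic ideal as $\mathcal J_{NLC}(X,\Delta+D)$ for an effective $D\sim_{\mathbb Q}mL$, then invoke Theorem~\ref{vani} and Castelnuovo--Mumford regularity. The paper takes a single general member $H\in|kmL|$ for $k$ large and divisible, sets $D=\tfrac{1}{k}H$, and then simply cites Theorem~\ref{gg} for the global generation rather than spelling out the regularity argument as you do.

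There is, however, an arithmetic slip in your construction of $D$. With $D_1,\dots,D_p\in|mpL|$ and $D=\tfrac{1}{p}(D_1+\cdots+D_p)$ one has $D\sim_{\mathbb Q}mpL$, not $mL$. Concretely, on a log resolution with $f^{-1}\mathfrak b_{mp}=\mathcal O_Y(-F)$ each $f^*D_i=F+M_i$ with $M_i$ free, so $f^*D=F+\tfrac{1}{p}\sum M_i$ and the fixed part $F$ enters with coefficient $1$ rather than $\tfrac{1}{p}$. Hence $\mathcal J_{NLC}(X,\Delta+D)$ computes $\mathcal J_{NLC}\bigl((X,\Delta);\mathfrak b_{mp}\bigr)$, not $\mathcal J_{NLC}\bigl((X,\Delta);\tfrac{1}{p}\mathfrak b_{mp}\bigr)$, and your key identity $L'-(\Delta+D)\sim_{\mathbb Q}A$ fails. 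The fix is immediate: either take a single general $H\in|mpL|$ and put $D=\tfrac{1}{p}H$ (this is what the paper does), or divide your sum by $p^2$ instead of $p$. With $D$ corrected, the rest of your argument goes through verbatim and coincides with the paper's proof.
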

\begin{proof}
Let $H\in |kmL|$ be a general member for a large 
and divisible $k$. Then $\mathcal J_{NLC}((X, \Delta), 
|\!|mL|\!|)=\mathcal J_{NLC}((X, \Delta), \frac {1}{k}H)
=\mathcal J_{NLC}(X, \Delta +\frac{1}{k}H)$. 
On the other hand, $\Delta +mL+A-(\Delta +\frac{1}{k}H)\sim 
_{\mathbb Q}A$. 
Thus, 
this theorem follows from Theorem \ref{vani} and Theorem \ref{gg}. 
\end{proof}

\section{Appendix}\label{sec4}  
\subsection{Inversion of adjunction on log canonicity} 
We give some comments on the inversion of adjunction 
on log canonicity. 
The following theorem is due to Kawakita. 
Roughly speaking, he proved it by iterating the 
restriction theorem between {\em{adjoint ideal sheaves}} on $X$  
and {\em{multiplier ideal sheaves}} on $S$. 
For the proof, see \cite{kawakita}. 

\begin{thm}[Kawakita]\label{kawa-thm}
Let $X$ be a normal variety, $S$ a reduced divisor on $X$, and $B$ 
an effective $\mathbb R$-divisor 
on $X$ such that 
$K_X+S+B$ is $\mathbb R$-Cartier. 
Assume that $S$ has no common irreducible component with the 
support of $B$. 
Let $\nu: S^{\nu}\to S$ be the normalization and  
$B_{S^{\nu}}$ the different on $S^{\nu}$ such that 
$K_{S^{\nu}}+B_{S^{\nu}}=\nu^*((K_X+S+B)|_S)$. 
Then $(X, S+B)$ is log canonical 
around $S$ if and only if 
$(S^{\nu}, B_{S^{\nu}})$ is log canonical. 
\end{thm}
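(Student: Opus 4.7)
The plan is to prove the two implications separately: the forward direction by classical adjunction on a log resolution, and the converse (the substantive inversion) by a perturbation argument whose engine is a restriction theorem for adjoint ideal sheaves modelled on Step 1 of Theorem \ref{main}.

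For the forward direction, assume $(X, S+B)$ is lc in a neighborhood of $S$. I would take a log resolution $f\colon Y\to X$ so that $S_Y:=f^{-1}_{*}S$ is smooth and the induced morphism $g\colon S_Y\to S^{\nu}$ is a log resolution of $(S^{\nu}, B_{S^{\nu}})$. Writing $K_Y+B_Y=f^{*}(K_X+S+B)$ and restricting to $S_Y\subset B_Y^{=1}$ gives
$$K_{S_Y}+(B_Y-S_Y)|_{S_Y}=g^{*}(K_{S^{\nu}}+B_{S^{\nu}}).$$
The lc hypothesis forces every coefficient of $B_Y-S_Y$ to be at most $1$ near $f^{-1}S$, and this is inherited by the restriction to $S_Y$, whence $(S^{\nu}, B_{S^{\nu}})$ is lc.

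For the converse, assume $(S^{\nu}, B_{S^{\nu}})$ is lc and suppose, for contradiction, that $(X, S+B)$ fails to be lc in any neighborhood of $S$. Set $t^{*}:=\sup\{t\in[0,1]:(X, S+tB)\text{ is lc around }S\}$, so $t^{*}<1$, and standard discrepancy analysis produces a divisor $E_{0}$ over $X$ with $a(E_{0}, X, S+t^{*}B)=-1$, $c_{X}(E_{0})\cap S\ne\emptyset$, and $\mathrm{ord}_{E_{0}}B>0$. By the forward direction applied to $(X, S+t^{*}B)$, the pair $(S^{\nu}, (t^{*}B)_{S^{\nu}})$ is lc. The heart of the argument is to show that $E_{0}$ descends to an lc place of $(S^{\nu}, (t^{*}B)_{S^{\nu}})$ whose center $V$ meets the support of the effective difference $B_{S^{\nu}}-(t^{*}B)_{S^{\nu}}=(1-t^{*})\nu^{*}(B|_{S})$. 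Once this descent is in hand, $(S^{\nu}, B_{S^{\nu}})$ fails to be lc along $V$, contradicting the hypothesis.

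The main obstacle is precisely this descent of lc places across the normalization. My plan is to execute it via the adjoint ideal sheaf
$$\mathrm{adj}_{S}(X, S+t^{*}B):=f_{*}\mathcal O_{Y}\bigl(\ulcorner -B_{Y}^{<1}\urcorner+S_{Y}-\llcorner B_{Y}^{>1}\lrcorner\bigr),$$
verifying its independence of $f$ along the lines of Lemma \ref{taisetsu}, and then proving the restriction formula
$$\mathrm{adj}_{S}(X, S+t^{*}B)\big|_{S}=\nu_{*}\mathcal J(S^{\nu}, (t^{*}B)_{S^{\nu}}).$$
The proof of the formula should parallel Step 1 of Theorem \ref{main}: apply $f_{*}$ to $0\to\mathcal O_{Y}(D-S_{Y})\to\mathcal O_{Y}(D)\to\mathcal O_{S_{Y}}(D)\to 0$ for $D=\ulcorner -B_{Y}^{<1}\urcorner+S_{Y}-\llcorner B_{Y}^{>1}\lrcorner$ and kill the connecting map via Theorem \ref{ap1}(1), which is crucial because the pairs involved are merely lc and Kawamata--Viehweg--Nadel vanishing is insufficient. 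Comparing the adjoint ideal with the multiplier ideal at $t=t^{*}$ should then yield the required lc center on $S^{\nu}$.
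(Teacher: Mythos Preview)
The paper does not give a proof of this theorem; it simply cites Kawakita's original paper \cite{kawakita} and remarks that ``roughly speaking, he proved it by iterating the restriction theorem between adjoint ideal sheaves on $X$ and multiplier ideal sheaves on $S$.'' Your proposal is in that spirit, but several steps are either incorrect as stated or are precisely where the real work lies.

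First, the perturbation $(X,S+tB)$ requires $K_X+S+tB$ to be $\mathbb R$-Cartier for $t\ne 1$, hence that $B$ (equivalently $K_X+S$) is itself $\mathbb R$-Cartier; this is not assumed. The same issue breaks your formula $B_{S^{\nu}}-(t^{*}B)_{S^{\nu}}=(1-t^{*})\nu^{*}(B|_S)$. Kawakita handles this by a different device (and first treats $\mathbb Q$-divisors).

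Second, your adjoint ideal is not the standard one. With $D=\ulcorner-B_Y^{<1}\urcorner+S_Y-\llcorner B_Y^{>1}\lrcorner$ the divisor $D$ contains $S_Y$ in its support, so the quotient in your short exact sequence is $\mathcal O_Y(D)|_{S_Y}\simeq \mathcal O_{S_Y}((A-N)|_{S_Y})\otimes N_{S_Y/Y}$, twisted by the normal bundle; it is not of the form $g_*\mathcal O_{S_Y}(\cdot)$ computing a multiplier ideal on $S^{\nu}$. If instead you use the standard adjoint ideal $f_*\mathcal O_Y(-\llcorner B_Y\lrcorner+S_Y)=f_*\mathcal O_Y(-\llcorner B_Y-S_Y\lrcorner)$, then $S_Y$ is not in the support, the restriction is $\nu_*\mathcal J(S^{\nu},B_{S^{\nu}})$, and the connecting map vanishes by ordinary local vanishing for multiplier ideals (Kawamata--Viehweg), since the relevant pair $(Y,\{B_Y\})$ is klt. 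So your claim that Kawamata--Viehweg--Nadel is insufficient here is mistaken; Theorem \ref{ap1} is not needed for this step, and indeed Kawakita does not use it.

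Third, and most importantly, the restriction formula for adjoint ideals gives the equivalence ``$(X,S+B)$ plt along $S$ $\Longleftrightarrow$ $(S^{\nu},B_{S^{\nu}})$ klt,'' not the lc statement. Your ``descent'' of the lc place $E_0$ to an lc place on $S^{\nu}$ meeting $\Supp(B_{S^{\nu}}-(t^{*}B)_{S^{\nu}})$ is exactly the gap between plt/klt and lc/lc, and it is not a formal consequence of the restriction formula at $t^{*}$: at $t^{*}$ both sides can be nontrivial without telling you where the new lc center sits relative to $B$. Kawakita closes this gap not by a single threshold $t^{*}$ but by an iteration (an induction on the number of exceptional divisors with discrepancy $\le -1$), repeatedly perturbing and applying the adjoint/multiplier restriction to peel off lc places one at a time. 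Your sketch identifies the right tool but underestimates the argument needed to make the descent go through.
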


By adjunction, it is obvious that $(S^{\nu}, B_{S^{\nu}})$ is log canonical 
if $(X, S+B)$ is log canonical 
around $S$. So, the above theorem is usually called the 
{\em{inversion of adjunction on log canonicity}}. 
We need the following corollary of Theorem \ref{kawa-thm} 
in the proof of Theorem \ref{main}. 
The proof is obvious. 

\begin{cor}\label{renketsu}
Let $(X, S+B)$ be as in {\em{Theorem \ref{kawa-thm}}}. 
Let $P\in X$ be a closed point such that 
$(X, S+B)$ is not log canonical at $P$. 
Let $f:Y\to X$ be a resolution such that 
$K_Y+B_Y=f^*(K_X+S+B)$ and 
that $\Supp B_Y$ is simple normal crossing. 
Then $f^{-1}(P)\cap S_Y\cap \Supp N\ne \emptyset$, 
where $S_Y=f^{-1}_*S$ and $N=\llcorner B^{>1}_Y\lrcorner$. 
\end{cor}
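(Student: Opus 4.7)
The plan is to use Kawakita's theorem (Theorem~\ref{kawa-thm}) to transfer the non-lc condition from $(X,S+B)$ down to $(S^\nu,B_{S^\nu})$, and then read off the required divisor from the resolution $f$. Note first that $f(S_Y)=S$, so the conclusion forces $P\in S$; I tacitly assume this.

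The first move is to produce a point $Q\in \nu^{-1}(P)\subset S^\nu$ at which $(S^\nu,B_{S^\nu})$ is not log canonical. I argue by contrapositive: if $(S^\nu,B_{S^\nu})$ were lc at every point of the finite set $\nu^{-1}(P)$, then, the non-lc locus being closed and $\nu$ being finite, there would exist a neighborhood $U$ of $P$ in $S$ with $(S^\nu,B_{S^\nu})$ lc on the open set $\nu^{-1}(U)$. Applied to a neighborhood $U'\subset X$ of $P$ with $U'\cap S\subset U$, Theorem~\ref{kawa-thm} would then force $(X,S+B)$ to be lc around $S\cap U'$, contradicting the assumption that $(X,S+B)$ is not lc at $P$.

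Since $S_Y$ is smooth (hence normal), the restriction $f|_{S_Y}:S_Y\to S$ factors through the normalization as $f|_{S_Y}=\nu\circ g$ for some $g:S_Y\to S^\nu$, and adjunction applied to $K_Y+B_Y=f^*(K_X+S+B)$ gives
$$
K_{S_Y}+(B_Y-S_Y)|_{S_Y}=g^*(K_{S^\nu}+B_{S^\nu}).
$$
Because $\Supp B_Y$ is snc, $g$ is a log resolution of $(S^\nu,B_{S^\nu})$, so the failure of lc at $Q$ yields a prime divisor $E\subset S_Y$ whose coefficient in $(B_Y-S_Y)|_{S_Y}$ is strictly greater than $1$ and with $Q\in g(E)$. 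By snc, $E$ is a component of $D\cap S_Y$ for a unique prime divisor $D\ne S_Y$ of $Y$, and the coefficient of $E$ in $(B_Y-S_Y)|_{S_Y}$ equals the coefficient of $D$ in $B_Y$; hence $D\subset \Supp N$. Choosing any $y\in E$ with $g(y)=Q$ (such $y$ exists because $g$ is proper, so $g(E)$ is closed and contains $Q$) yields $f(y)=\nu(Q)=P$ and $y\in S_Y\cap \Supp N$, as required. The only delicate point is the first step—carefully localizing Kawakita's theorem at $P$ via the finiteness of $\nu$—since everything after that is formal adjunction and snc bookkeeping.
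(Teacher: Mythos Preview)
Your argument is correct and is precisely the ``obvious'' proof the paper has in mind (the paper itself gives no details beyond that word). You correctly localize Theorem~\ref{kawa-thm} at $P$ via the finiteness of $\nu$ to obtain a non-lc point $Q\in S^\nu$ over $P$, and then read off from the snc resolution a component $D$ of $N$ meeting $S_Y$ over $P$; your remark that the conclusion forces $P\in S$, so this must be a standing hypothesis, is also right. One small quibble: if $S$ is reducible then $S_Y$ need not be smooth or normal globally (it is only snc), so strictly speaking the factorization through $S^\nu$ should be carried out on each irreducible component of $S_Y$; this does not affect the argument, since the divisor $D$ you produce has coefficient $>1$ in $B_Y$ regardless.
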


We close this subsection with a remark on the theory of quasi-log varieties. 

\begin{rem}We use the notation in Theorem \ref{kawa-thm}. 
We note that 
$[X, K_X+S+B]$ has a natural quasi-log structure, which was introduced 
by Ambro. See, for example, \cite[Chapter 3]{fujino-book}. 
By adjunction, $S'=S\cup X_{NLC}$ has a natural quasi-log structure 
induced by $[X, K_X+S+B]$. 
More explicitly, the defining ideal sheaf of the quasi-log variety $S'$ 
is $J$ in the proof of Theorem \ref{main}. 
In Step \ref{step1} in the proof of 
Theorem \ref{main}, we did not use the normality of 
$S$. Theorem \ref{kawa-thm} says that 
$[S', (K_X+S+B)|_{S'}]$ has only qlc singularities 
around $S$ if and only if $(S^{\nu}, B_{S^{\nu}})$ is lc. 
\end{rem}

\subsection{New Cohomological Package}
We quickly review Ambro's formulation 
of torsion-free and vanishing theorems in a simplified form. 
For more advanced topics and the proof, see \cite[Chapter 2]{fujino-book}. 

Let $Y$ be a simple normal crossing divisor 
on a smooth 
variety $M$ and $D$ an $\mathbb R$-divisor 
on $M$ such that 
$\Supp (D+Y)$ is simple normal crossing and that 
$D$ and $Y$ have no common irreducible components. 
We put $B=D|_Y$ and consider the pair $(Y, B)$. 
Let $\nu:Y^{\nu}\to Y$ be the normalization. 
We put $K_{Y^\nu}+\Theta=\nu^*(K_Y+B)$. 
A {\em{stratum}} of $(Y, B)$ is an irreducible component of $Y$ or 
the image of some lc center of $(Y^\nu, \Theta^{=1})$. 

When $Y$ is smooth and $B$ is an $\mathbb R$-divisor 
on $Y$ such that 
$\Supp B$ is simple normal crossing, we 
put $M=Y\times \mathbb A^1$ and $D=B\times \mathbb A^1$. 
Then $(Y, B)\simeq (Y\times \{0\}, B\times \{0\})$ satisfies 
the above conditions. 

\begin{thm}\label{ap1}
Let $(Y, B)$ be as above. 
Assume that $B$ is a boundary $\mathbb R$-divisor. 
Let 
$f:Y\to X$ be a proper morphism and $L$ a Cartier 
divisor on $Y$. 

$(1)$ Assume that $H\sim _{\mathbb R}L-(K_Y+B)$ is $f$-semi-ample.
Then 
every non-zero local section of $R^qf_*\mathcal O_Y(L)$ contains 
in its support the $f$-image of 
some stratum of $(Y, B)$. 

$(2)$ Let $q$ be an arbitrary non-negative 
integer. Let $\pi:X\to V$ be a proper morphism and 
assume that $H\sim _{\mathbb R}f^*H'$ for 
some $\pi$-ample $\mathbb R$-Cartier 
$\mathbb R$-divisor $H'$ on $X$. 
Then, $R^qf_*\mathcal O_Y(L)$ is $\pi_*$-acyclic, that is, 
$R^p\pi_*R^qf_*\mathcal O_Y(L)=0$ for every $p>0$. 
\end{thm}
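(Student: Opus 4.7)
The plan is to prove Theorem \ref{ap1} as the simple-normal-crossing generalization (due to Ambro, in Fujino's formulation) of Koll\'ar's classical torsion-freeness and vanishing theorems. The induction will run on the pair $(\dim Y, \#\text{components of }Y)$, with adjunction transferring the hypothesis along each stratum.

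First I would set up the ambient geometry: when $Y$ is already smooth, use the trick recorded just above the statement (embed $Y$ in $M = Y\times \mathbb A^1$ with $D = B\times \mathbb A^1$) so that one may always assume $Y$ is an snc divisor in a smooth $M$. Next, a small perturbation of the coefficients of $B^{<1}$ reduces to the case where $B$ is a $\mathbb Q$-divisor without changing $\llcorner B\lrcorner$, $B^{=1}$, or the semi-ampleness / pullback hypotheses on $H$. At this point one can pass to a suitable cyclic cover (Kawamata--Viehweg style) to absorb the fractional part and reduce each cohomological statement to the integral, reduced snc setting required to invoke the underlying Hodge-theoretic vanishing.

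For part (1), I would induct on $(\dim Y, \#\text{irreducible components of }Y)$. If $Y$ is irreducible, then $Y$ is smooth and $(Y, B)$ is a klt-plus-snc pair on a smooth variety, so the statement reduces to Koll\'ar's torsion-freeness: every nonzero local section of $R^qf_*\mathcal O_Y(L)$ contains in its support the image of $Y$ or of an lc center of $(Y, B^{=1})$, which is precisely a stratum of $(Y,B)$. For reducible $Y$, pick an irreducible component $Y_1$ of $Y$, set $Y_2 = Y - Y_1$, and use the short exact sequence
\[
0 \to \mathcal O_{Y_1}(L|_{Y_1} - Y_1\cap Y_2) \to \mathcal O_Y(L) \to \mathcal O_{Y_2}(L|_{Y_2}) \to 0.
\]
By adjunction on $M$, the restrictions $B|_{Y_1} + (Y_1\cap Y_2)$ and $B|_{Y_2}$ are again snc boundary divisors arising from the ambient pair, and the twisted divisors still satisfy the semi-ampleness hypothesis (the pullback of $H$ works). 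The inductive hypothesis applies to each end of the long exact sequence, and since every stratum of $(Y_1, B|_{Y_1}+(Y_1\cap Y_2))$ and of $(Y_2, B|_{Y_2})$ is a stratum of $(Y,B)$, the support condition on $R^qf_*\mathcal O_Y(L)$ follows by the standard diagram-chase.

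For part (2), torsion-freeness from (1) is converted into vanishing by the usual covering/perturbation argument. Since $H \sim_{\mathbb R} f^*H'$ with $H'$ $\pi$-ample, for $m \gg 0$ pick a general member $H_0 \in |mH'|$ whose pullback $f^*H_0$ does not contain the image of any stratum of $(Y,B)$; by (1), multiplication by a local equation of $H_0$ is injective on $R^qf_*\mathcal O_Y(L)$. A twist by $\mathcal O_X(mH')$ plus Serre vanishing on $V$ kills higher $R^p\pi_*$ after sufficient twisting, and downward induction on $\dim V$ using the short exact sequence
\[
0 \to R^qf_*\mathcal O_Y(L)\otimes \mathcal O_X(-H_0) \to R^qf_*\mathcal O_Y(L) \to R^qf_*\mathcal O_Y(L)|_{H_0} \to 0
\]
together with the Leray spectral sequence for $\pi\circ f$ completes the vanishing.

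The main obstacle will be the inductive bookkeeping in part (1): each adjunction step rewrites $K_{Y_i} + (B + Y_{3-i})|_{Y_i}$ as a new snc pair, and one must verify that the boundary condition on $B$, the semi-ampleness of $H$, and the identification of strata are all preserved. The Hodge-theoretic content—ultimately a consequence of the mixed Hodge structure on the cohomology of an snc pair, as developed in \cite[Chapter 2]{fujino-book}—enters only at the irreducible base case, but extracting the precise support statement at each inductive step from Koll\'ar's classical result requires the careful identification of lc centers after adjunction.
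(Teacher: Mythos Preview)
The paper does not prove Theorem~\ref{ap1}; immediately after the statement it writes ``For the proof, see \cite[Theorem 2.39]{fujino-book},'' and the result is used throughout as a black box. So there is no in-paper proof to compare your proposal against.

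That said, your outline is in the right spirit but the inductive step for part (1) has a genuine gap. Knowing the support condition for the outer terms of
\[
\cdots \to R^qf_*\mathcal O_{Y_1}(L|_{Y_1}-Y_1\cap Y_2)\to R^qf_*\mathcal O_Y(L)\to R^qf_*\mathcal O_{Y_2}(L|_{Y_2})\to\cdots
\]
does \emph{not} yield it for the middle term by a ``standard diagram-chase.'' If a local section $\sigma$ of $R^qf_*\mathcal O_Y(L)$ has support avoiding all stratum images, its image in $R^qf_*\mathcal O_{Y_2}(L|_{Y_2})$ vanishes, so $\sigma$ lifts to some $\tau$ on the $Y_1$-side; but nothing forces $\tau$ to have the same small support, and the connecting homomorphism from degree $q-1$ on $Y_2$ can contribute. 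You cannot conclude $\sigma=0$ from this alone.

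In the reference you are effectively trying to reproduce (\cite[Chapter 2]{fujino-book}), the logical order is different: one first proves an \emph{injectivity theorem}---for suitable effective $D$ on $Y$ whose support contains no stratum, the natural map $R^qf_*\mathcal O_Y(L)\to R^qf_*\mathcal O_Y(L+D)$ is injective---using $E_1$-degeneration of the Hodge--to--de Rham spectral sequence on a simple normal crossing compactification (this is where the Du Bois/mixed Hodge input really lives, not just at an ``irreducible base case''). Torsion-freeness (1) and vanishing (2) are then formal consequences of injectivity, exactly as in Koll\'ar's original argument for the smooth case. Your sketch for part (2) is essentially this deduction and is fine once (1) is in hand; the problem is that your route to (1) bypasses the injectivity step and the shortcut does not close.
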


For the proof, see \cite[Theorem 2.39]{fujino-book}. 
We note that \cite{fuji-pro} is a gentle introduction to this new 
cohomological package. 
The reader can find various applications in \cite{fujino-eff}, 
\cite{fujino3}, \cite{fuji-quasi}, \cite{fujino-book}, 
\cite{fuji-non}, and \cite{fuji-f}. 

\ifx\undefined\bysame
\newcommand{\bysame|{leavemode\hbox to3em{\hrulefill}\,}
\fi

\end{document}